\begin{document}
\title[Pre-Lie-Rinehart algebra of aromatic trees]
{The universal pre-Lie-Rinehart algebras of aromatic trees}
\author{Gunnar Fl{\o}ystad}
\address{Matematisk Institutt\\
        Universitetet i Bergen} 
\email{Gunnar.Floystad@uib.no}


\author{Dominique Manchon}
\thanks{This work was partially supported by the project Pure Mathematics in Norway, funded by Trond Mohn Foundation and Tromsø Research Foundation}
\address{Laboratoire de Mathematique Blaise Pascal\\
         CNRS and Universite Clermont-Auvergne}
\email{Dominique.Manchon@uca.fr}


\author{Hans Z. Munthe-Kaas}
\address{Matematisk Institutt\\
        Universitetet i Bergen}
\email{Hans.Munthe-Kaas@uib.no}




\keywords{free pre-Lie algebra, Lie-Rinehart algebra, aromatic tree, trace map}
\subjclass[2010]{Primary: 17A50, 17D25}
\date{\today}

\begin{abstract}
We organize colored aromatic trees into a pre-Lie-Rinehart algebra (i.e. a flat torsion-free Lie-Rinehart algebra) endowed with a natural trace map, and show the freeness of this object among pre-Lie-Rinehart algebras with trace. This yields the algebraic foundations of aromatic B-series. 
\end{abstract}
\maketitle


\theoremstyle{plain}
\newtheorem{theorem}{Theorem}[section]
\newtheorem{corollary}[theorem]{Corollary}
\newtheorem*{main}{Main Theorem}
\newtheorem{lemma}[theorem]{Lemma}
\newtheorem{proposition}[theorem]{Proposition}
\newtheorem{conjecture}[theorem]{Conjecture}

\theoremstyle{definition}
\newtheorem{definition}[theorem]{Definition}
\newtheorem{fact}[theorem]{Fact}

\theoremstyle{remark}
\newtheorem{notation}[theorem]{Notation}
\newtheorem{remark}[theorem]{Remark}
\newtheorem{example}[theorem]{Example}
\newtheorem{claim}[theorem]{Claim}

\def \restr#1{\mathstrut_{\textstyle |}\raise-6pt\hbox{$\scriptstyle #1$}}
\long\def\ignore#1{}
\def \mop#1{\mathop{\hbox{\rm #1}}\nolimits}
\def \mopl#1{\mathop{\hbox{\rm #1}}\limits}
\def \smop#1{\mathop{\hbox{\sevenrm #1}}\nolimits}
\newcommand{\T}{{\mathcal T}}
\def\arbrea{\,{\scalebox{0.15}{
  \begin{picture}(8,55) (370,-248)
    \SetWidth{2}
    \SetColor{Black}
    \Line(374,-244)(374,-200)
    \Vertex(374,-197){9}
    \Vertex(375,-245){12}
  \end{picture}
}}\,}

\newcommand{\lri}{L}
\newcommand{\emo}{E\ell}
\newcommand{\rri}{R}
\newcommand{\tre}{T}
\newcommand{\vtre}{V}
\newcommand{\aro}{A}
\newcommand{\varo}{B}
\newcommand{\arot}{D}
\newcommand{\eend}{\text{End}}
\newcommand{\di}{d}


\newcommand{\psp}[1]{{{\bf P}^{#1}}}
\newcommand{\psr}[1]{{\bf P}(#1)}
\newcommand{\op}{{\mathcal O}}
\newcommand{\opw}{\op_{\psr{W}}}
\newcommand{\go}{\op}

\newcommand{\ini}[1]{\text{in}(#1)}
\newcommand{\gin}[1]{\text{gin}(#1)}
\newcommand{\kr}{{\Bbbk}}
\newcommand{\pd}{\partial}
\newcommand{\vardel}{\partial}
\renewcommand{\tt}{{\bf t}}


\newcommand{\coh}{{{\text{{\rm coh}}}}}


\newcommand{\modv}[1]{{#1}\text{-{mod}}}
\newcommand{\modstab}[1]{{#1}-\underline{\text{mod}}}

\newcommand{\sut}{{}^{\tau}}
\newcommand{\sumit}{{}^{-\tau}}
\newcommand{\til}{\thicksim}

\newcommand{\totp}{\text{Tot}^{\prod}}
\newcommand{\dsum}{\bigoplus}
\newcommand{\dprod}{\prod}
\newcommand{\lsum}{\oplus}
\newcommand{\lprod}{\Pi}

\newcommand{\La}{{\Lambda}}

\newcommand{\sirstj}{\circledast}

\newcommand{\she}{\EuScript{S}\text{h}}
\newcommand{\cm}{\EuScript{CM}}
\newcommand{\cmd}{\EuScript{CM}^\dagger}
\newcommand{\cmri}{\EuScript{CM}^\circ}
\newcommand{\cler}{\EuScript{CL}}
\newcommand{\clerd}{\EuScript{CL}^\dagger}
\newcommand{\clerri}{\EuScript{CL}^\circ}
\newcommand{\gor}{\EuScript{G}}
\newcommand{\cF}{\mathcal{F}}
\newcommand{\cG}{\mathcal{G}}
\newcommand{\cH}{\mathcal{H}}
\newcommand{\cM}{\mathcal{M}}
\newcommand{\cE}{\mathcal{E}}
\newcommand{\cD}{\mathcal{D}}
\newcommand{\cI}{\mathcal{I}}
\newcommand{\cP}{\mathcal{P}}
\newcommand{\cK}{\mathcal{K}}
\newcommand{\cL}{\mathcal{L}}
\newcommand{\cS}{\mathcal{S}}
\newcommand{\cC}{\mathcal{C}}
\newcommand{\cO}{\mathcal{O}}
\newcommand{\cJ}{\mathcal{J}}
\newcommand{\cU}{\mathcal{U}}
\newcommand{\cR}{\mathcal{R}}
\newcommand{\cQ}{\mathcal{Q}}
\newcommand{\mm}{\mathfrak{m}}

\newcommand{\dlim} {\varinjlim}
\newcommand{\ilim} {\varprojlim}

\newcommand{\CM}{\text{CM}}
\newcommand{\Mon}{\text{Mon}}


\newcommand{\Kom}{\text{Kom}}


\newcommand{\EH}{{\mathbf H}}
\newcommand{\res}{\text{res}}
\newcommand{\Hom}{\text{Hom}}
\newcommand{\inhom}{{\underline{\text{Hom}}}}
\newcommand{\Ext}{\text{Ext}}
\newcommand{\Tor}{\text{Tor}}
\newcommand{\ghom}{\mathcal{H}om}
\newcommand{\gext}{\mathcal{E}xt}
\newcommand{\id}{\text{{id}}}
\newcommand{\im}{\text{im}\,}
\newcommand{\codim} {\text{codim}\,}
\newcommand{\resol}{\text{resol}\,}
\newcommand{\rank}{\text{rank}\,}
\newcommand{\lpd}{\text{lpd}\,}
\newcommand{\coker}{\text{coker}\,}
\newcommand{\supp}{\text{supp}\,}
\newcommand{\Ad}{A_\cdot}
\newcommand{\Bd}{B_\cdot}
\newcommand{\Fd}{F_\cdot}
\newcommand{\Gd}{G_\cdot}


\newcommand{\sus}{\subseteq}
\newcommand{\sups}{\supseteq}
\newcommand{\pil}{\rightarrow}
\newcommand{\vpil}{\leftarrow}
\newcommand{\rpil}{\leftarrow}
\newcommand{\lpil}{\longrightarrow}
\newcommand{\inpil}{\hookrightarrow}
\newcommand{\pils}{\twoheadrightarrow}
\newcommand{\projpil}{\dashrightarrow}
\newcommand{\dotpil}{\dashrightarrow}
\newcommand{\adj}[2]{\overset{#1}{\underset{#2}{\rightleftarrows}}}
\newcommand{\mto}[1]{\stackrel{#1}\longrightarrow}
\newcommand{\vmto}[1]{\stackrel{#1}\longleftarrow}
\newcommand{\mtoelm}[1]{\stackrel{#1}\mapsto}
\newcommand{\bihom}[2]{\overset{#1}{\underset{#2}{\rightleftarrows}}}
\newcommand{\eqv}{\Leftrightarrow}
\newcommand{\impl}{\Rightarrow}

\newcommand{\iso}{\cong}
\newcommand{\te}{\otimes}
\newcommand{\into}[1]{\hookrightarrow{#1}}
\newcommand{\ekv}{\Leftrightarrow}
\newcommand{\equi}{\simeq}
\newcommand{\isopil}{\overset{\cong}{\lpil}}
\newcommand{\equipil}{\overset{\equi}{\lpil}}
\newcommand{\ispil}{\isopil}
\newcommand{\vvi}{\langle}
\newcommand{\hvi}{\rangle}
\newcommand{\susneq}{\subsetneq}
\newcommand{\sgn}{\text{sign}}


\newcommand{\xd}{\check{x}}
\newcommand{\ortog}{\bot}
\newcommand{\tL}{\tilde{L}}
\newcommand{\tM}{\tilde{M}}
\newcommand{\tH}{\tilde{H}}
\newcommand{\tvH}{\widetilde{H}}
\newcommand{\tvh}{\widetilde{h}}
\newcommand{\tV}{\tilde{V}}
\newcommand{\tS}{\tilde{S}}
\newcommand{\tT}{\tilde{T}}
\newcommand{\tR}{\tilde{R}}
\newcommand{\tf}{\tilde{f}}
\newcommand{\ts}{\tilde{s}}
\newcommand{\tp}{\tilde{p}}
\newcommand{\tr}{\tilde{r}}
\newcommand{\tfst}{\tilde{f}_*}
\newcommand{\empt}{\emptyset}
\newcommand{\bfa}{{\mathbf a}}
\newcommand{\bfb}{{\mathbf b}}
\newcommand{\bfd}{{\mathbf d}}
\newcommand{\bfl}{{\mathbf \ell}}
\newcommand{\bfx}{{\mathbf x}}
\newcommand{\bfm}{{\mathbf m}}
\newcommand{\bfv}{{\mathbf v}}
\newcommand{\bft}{{\mathbf t}}
\newcommand{\la}{\lambda}
\newcommand{\bfen}{{\mathbf 1}}
\newcommand{\bfe}{{\mathbf 1}}
\newcommand{\ep}{\epsilon}
\newcommand{\en}{r}
\newcommand{\tu}{s}
\newcommand{\Sym}{\text{Sym}}
\newcommand{\nablah}{{\nabla}}
\newcommand{\hbeta}{\hat{\beta}}
\newcommand{\modN}{N}

\newcommand{\ome}{\omega_E}

\newcommand{\bevis}{{\bf Proof. }}
\newcommand{\demofin}{\qed \vskip 3.5mm}
\newcommand{\nyp}[1]{\noindent {\bf (#1)}}
\newcommand{\demo}{{\it Proof. }}
\newcommand{\demodone}{\demofin}
\newcommand{\parg}{{\vskip 2mm \addtocounter{theorem}{1}  
                   \noindent {\bf \thetheorem .} \hskip 1.5mm }}

\newcommand{\lcm}{{\text{lcm}}}


\newcommand{\dl}{\Delta}
\newcommand{\cdel}{{C\Delta}}
\newcommand{\cdelp}{{C\Delta^{\prime}}}
\newcommand{\dlst}{\Delta^*}
\newcommand{\Sdl}{{\mathcal S}_{\dl}}
\newcommand{\lk}{\text{lk}}
\newcommand{\lkd}{\lk_\Delta}
\newcommand{\lkp}[2]{\lk_{#1} {#2}}
\newcommand{\del}{\Delta}
\newcommand{\delr}{\Delta_{-R}}
\newcommand{\dd}{{\dim \del}}
\newcommand{\Del}{\Delta}

\renewcommand{\aa}{{\bf a}}
\newcommand{\bb}{{\bf b}}
\newcommand{\cc}{{\bf c}}
\newcommand{\xx}{{\bf x}}
\newcommand{\yy}{{\bf y}}
\newcommand{\zz}{{\bf z}}
\newcommand{\mv}{{\xx^{\aa_v}}}
\newcommand{\mF}{{\xx^{\aa_F}}}
\newcommand{\kk}{\mathbf k}

\newcommand{\Symm}{\text{Sym}}
\newcommand{\pnm}{{\bf P}^{n-1}}
\newcommand{\opnm}{{\go_{\pnm}}}
\newcommand{\ompnm}{\omega_{\pnm}}

\newcommand{\pn}{{\bf P}^n}
\newcommand{\hele}{{\mathbb Z}}
\newcommand{\nat}{{\mathbb N}}
\newcommand{\rasj}{{\mathbb Q}}
\newcommand{\bfone}{{\mathbf 1}}

\newcommand{\dt}{\bullet}
\newcommand{\disk}{\scriptscriptstyle{\bullet}}

\newcommand{\cxF}{F_\dt}
\newcommand{\pol}{f}

\newcommand{\Rn}{{\mathbb R}^n}
\newcommand{\An}{{\mathbb A}^n}
\newcommand{\frg}{\mathfrak{g}}
\newcommand{\PW}{{\mathbb P}(W)}
\newcommand{\Ann}{\operatorname{Ann}}

\newcommand{\pos}{{\mathcal Pos}}
\newcommand{\g}{{\gamma}}

\newcommand{\Vaa}{V_0}
\newcommand{\Bp}{B^\prime}
\newcommand{\Bpp}{B^{\prime \prime}}
\newcommand{\bbp}{\mathbf{b}^\prime}
\newcommand{\bbpp}{\mathbf{b}^{\prime \prime}}
\newcommand{\bp}{{b}^\prime}
\newcommand{\bpp}{{b}^{\prime \prime}}

\newcommand{\oLa}{\overline{\Lambda}}
\newcommand{\ov}[1]{\overline{#1}}
\newcommand{\ovv}[1]{\overline{\overline{#1}}}
\newcommand{\tm}{\tilde{m}}
\newcommand{\po}{\bullet}

\def\CC{{\mathbb C}}
\def\GG{{\mathbb G}}
\def\ZZ{{\mathbb Z}}
\def\NN{{\mathbb N}}
\def\RR{{\mathbb R}}
\def\OO{{\mathbb O}}
\def\QQ{{\mathbb Q}}
\def\VV{{\mathbb V}}
\def\PP{{\mathbb P}}
\def\EE{{\mathbb E}}
\def\FF{{\mathbb F}}
\def\AA{{\mathbb A}}
\def\X{{\mathcal X}}

\newcommand{\C}{{\mathcal C}}
\newcommand{\m}{{\mathfrak m}}
\newcommand{\Der}{\text{Der}}
\renewcommand{\en}{{\mathbf 1}}
\newcommand{\scup}{\sqcup}
\newcommand{\tri}{\rhd}
\newcommand{\End}{\text{End}}
\newcommand{\mato}[1]{\overset{#1}{\mapsto}}
\newcommand{\For}{\text{\it For}}
\tableofcontents

\section{Introduction}
In the analysis of structure preserving discretisation of differential equations, series developments indexed by trees are fundamental tools. The  relationship between algebraic and geometric properties of such series have been extensively developed in recent years. The mother of all these series is B-series, introduced the seminal works of John Butcher in the 1960s~\cite{butcher1963coefficients,butcher1972algebraic}. However, the fundamental idea of  denoting analytical forms of differential calculus with trees was conceived already a century earlier by Cayley~\cite{cayley1857}. 

A modern understanding of B-series stems from the algebra of  flat and torsion free connections naturally associated with  locally Euclidean geometries. The  vector fields on $\RR^d$ form a \emph{pre-Lie} algebra $L$ with product given by the connection $\triangleright$ in (\ref{eq:canonical}). The \emph{free pre-Lie algebra} is the vector space spanned by rooted trees with tree grafting as the product~\cite{chapoton2001pre}. A B-series can be defined as an element $B_a$ in the graded completion of the free pre-Lie algebra, yielding infinite series of trees with coefficients $a(t) \in \RR$ for each tree $t$. By the universal property, a mapping  $\ab\mapsto f\in L$, sending the single node tree to a vector field, extends
uniquely to a mapping $B_a\mapsto B_a(f)$, where $B_a(f)$ is an infinite series of vector fields
\[B_a(f) = a(\ab) f + a(\aabb) f\triangleright f + a(\aaabbb)(f\triangleright f )\triangleright f + a(\aababb) \big(f\triangleright (f \triangleright f) - (f\triangleright f )\triangleright f\big)+\cdots .\]

On the geometric side it has recently been shown~\cite{mclachlan2016b} that B-series are intimately connected with (strongly) affine equivariant families of mappings
of vector fields on Euclidean spaces. An infinite family of smooth mappings  $\Phi_n\colon \X\RR^n\rightarrow \X\RR^n$ for $n\in \NN$ has a unique B-series expansion $B_a$ if and only if the family respects all affine linear mappings $\varphi(x)= Ax+b\colon \RR^m\rightarrow \RR^n$. This means that  $f\in \X\RR^n$ being  $\varphi$-related to $g\in \X\RR^m$ implies $\Phi_n(f)$ being  $\varphi$-related to $\Phi_m(g)$. 
Subject to convergence of the formal series we have $\Phi_n(f)=B_a(f)$. 

\emph{Aromatic} B-series is a generalisation which was introduced for the study of volume preserving integration algorithms~\cite{chartier2007preserving,iserles2007b}, more recently studied in \cite{bogfjellmo2019algebraic,munthe2016aromatic}. 
The divergence of a tree is represented as a sum of 'aromas', graphs obtained  by joining  the tree root to any of  the tree's nodes. 
Aromas are connected directed graphs where each node has one outgoing edge. They consist of one cyclic sub-graph with trees attached to the nodes in the cycle. Aromatic B-series are indexed by \emph{aromatic trees},  defined as a tree multiplied by a number of aromas. 

The geometric significance of aromatic B-series is established in~\cite{munthe2016aromatic}. 
Consider a smooth local mapping of vector fields on a finite dimensional vector space, 
 $\Phi\colon \X \RR^d\rightarrow \X \RR^d$. 'Local' means that the support is non-increasing, $\supp(\Phi(f))\subset \supp(f)$. 
 Such a mapping can be expanded in an aromatic B-series if and only if it is equivariant with respect to all affine (invertible) diffeomorphisms $\varphi(x)= Ax+b\colon \RR^d\rightarrow \RR^d$. An equivalent formulation of this result is in terms of the pre-Lie algebra $L = (\X \RR^d,\triangleright)$ defined in the Canonical example of Section~\ref{subsec:preLR}. The isomorphisms of $L$ are exactly the pullback of vector fields by affine diffeomorphisms $\xi(f)= A^{-1}  f\circ \varphi$, hence:
 \begin{theorem}\label{th:1}Let $L$ be the canonical pre-Lie algebra of vector fields on a finite dimensional euclidean space. A smooth local mapping $\Phi\colon L\rightarrow L$ can be expanded in an aromatic B-series if and only if 
 $\Phi\circ \xi = \xi\circ \Phi$  for all pre-Lie isomorphisms $\xi \colon L\rightarrow L$.
 \end{theorem}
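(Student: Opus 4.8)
The plan is to prove the two implications separately, with the routine structural content in the forward direction and the substance in the converse. For the forward direction, suppose $\Phi=B_a$ is an aromatic B-series. Every elementary differential attached to an aromatic tree is assembled from just two ingredients: the pre-Lie product $\triangleright$ (the flat connection $\nabla$, written in coordinates as $(f\triangleright g)^j=\sum_i f^i\partial_i g^j$) and the trace map producing the aromas (the divergence $\sum_i\partial_i f^i$). Both are natural for the affine structure: a direct coordinate computation shows that each affine pullback $\xi$ intertwines the connection (it is by hypothesis a pre-Lie isomorphism) and also intertwines the divergence with the scalar pullback. Hence $\xi$ commutes with every elementary differential and, extending by degree to the graded completion, with the whole series $B_a$, giving $\Phi\circ\xi=\xi\circ\Phi$. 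So I would dispatch this direction by checking naturality of the two generating operations.

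The substance is the converse: a smooth local $\Phi$ commuting with all affine pullbacks is an aromatic B-series. First I would pass from the map to its symbol. Locality, $\supp\Phi(f)\subset\supp f$, together with smoothness lets me invoke a Peetre-type theorem, so that $\Phi(f)(x)$ depends only on the jet of $f$ at $x$, i.e. on the collection of tensors $\partial^\alpha f^i(x)$. Equivariance under the translations $\varphi(x)=x+b$ then removes any explicit dependence on $x$, giving $\Phi(f)(x)=F\big((\partial^\alpha f^i(x))_{\alpha,i}\big)$ for a single smooth $F$. Since every affine action is linear in $f$, I would Taylor-expand $F$ and observe that each homogeneous component $F_n$ of degree $n$ in $f$ is separately equivariant, reducing the problem to the classification of these components.

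The heart of the argument is invariant theory for the remaining subgroup $\mathrm{GL}(d)$, acting by $\xi_A(f)(x)=A^{-1}f(Ax)$. Under this action each input slot $\partial^\alpha f^i$ transforms as a tensor with one contravariant index $i$ and $|\alpha|$ covariant indices, while the output $\Phi(f)^j$ transforms as a single contravariant vector, so $F_n$ is a $\mathrm{GL}(d)$-equivariant polynomial map between tensor representations of the defining representation $V=\RR^d$. By the first fundamental theorem for $\mathrm{GL}(d)$, every such map is spanned by complete contractions using only the canonical pairing $V\otimes V^*\to\kr$. A contraction pairs the single upper index of one factor with one lower (derivative) index of another; recording an arrow from the factor supplying the upper index yields a directed graph on the $n$ nodes in which every node has out-degree one except the one free upper index, which is the output. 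Counting the indices (all lower indices must be contracted, and all but one upper index) forces the total derivative order to equal $n-1$, and these graphs are exactly the aromatic trees: the component carrying the free index is the rooted tree, and the closed contraction loops are the aromas. Thus each $F_n$ is a linear combination of elementary differentials of aromatic trees, and summing over $n$ exhibits $\Phi$ as an aromatic B-series.

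The main obstacle is this invariant-theoretic step, and two points deserve care. First, the first fundamental theorem must be applied in the fixed finite dimension $d$; here I only need that contractions \emph{span} the equivariant maps, not that distinct aromatic trees yield independent operations, so the low-dimensional relations of the second fundamental theorem are harmless for the mere existence of an expansion. Second, the analytic reduction to jets must be made rigorous: the Peetre-type locality argument and the passage to a homogeneous expansion should be phrased at the level of formal series, consistent with the convergence caveat already noted for $\Phi(f)=B_a(f)$.
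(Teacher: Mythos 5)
Your proposal cannot be checked against a proof in the paper, because the paper does not prove Theorem~1.1: the result is imported from \cite{munthe2016aromatic}, and the paper's only contribution is the reformulation of ``equivariance under affine diffeomorphisms'' as ``commuting with all pre-Lie isomorphisms of $L$'', justified by the asserted (and unproved) fact that the isomorphisms of the canonical pre-Lie algebra $(\X\RR^d,\triangleright)$ are exactly the pullbacks $f\mapsto A^{-1}f\circ\varphi$ by affine diffeomorphisms. Granting that fact, your sketch of the converse direction is a faithful reconstruction of the strategy of the cited proof: a nonlinear Peetre-type theorem to reduce to jets, translation equivariance to remove the base-point dependence, decomposition into homogeneous components, and the first fundamental theorem for $\mathrm{GL}(d)$ identifying the equivariant multilinear maps with contraction schemes whose out-degree-one combinatorics is exactly that of aromatic trees. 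Your two cautionary remarks (spanning versus linear independence of the contractions, and the formal-series reading of the expansion) are the right ones.

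There is, however, a genuine gap, and it sits in the direction you call routine. The theorem asks for commutation with \emph{all} pre-Lie isomorphisms $\xi$ of $L$, whereas your forward argument only verifies commutation with affine pullbacks: you check that an affine pullback intertwines $\triangleright$ and the divergence, but an abstract pre-Lie isomorphism is only assumed to intertwine $\triangleright$, and nothing in that assumption alone says it intertwines the trace --- the trace lives on $\Hom_R(L,L)$ with $R=C^\infty(\RR^d)$, and compatibility with the $R$-module structure is not part of the pre-Lie data. So your forward direction establishes the stated theorem only modulo the lemma that every pre-Lie automorphism of the canonical $L$ \emph{is} an affine pullback, which is precisely the fact the paper invokes without proof to pass between the two formulations; your parenthetical ``it is by hypothesis a pre-Lie isomorphism'' conflates the two notions rather than proving their equivalence. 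That lemma is classical but not free (a Pursell--Shanks-type rigidity statement for vector fields, upgraded by preservation of the flat torsion-free connection), and it should be stated and either proved or explicitly cited. Secondarily, the Peetre-type step for \emph{nonlinear} local operators is a substantial analytic theorem in its own right (Slov\'ak), not a routine corollary of the linear Peetre theorem; you flag this honestly, but as written it remains an unproved input rather than a completed step.
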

 
 This result shows that aromatic $B$-series have a fundamental geometric significance. 
The  question to be addressed in this paper is to understand their {algebraic} foundations.
\emph{In what sense can aromatic B-series be defined as a free object in some category?} Trees represent vector fields and aromas represent scalar functions on a domain. The derivation of a scalar field by a vector field is modelled by grafting the tree on the aromas. 
A suitable geometric model for this is pre-Lie algebroids, defined as Lie algebroids with a flat and torsion free connection~\cite{munthe2020invariant}. Lie algebroids are
vector bundles on a domain together with an 'anchor map', associating sections of the vector bundle with derivations of the ring of  smooth scalar functions. 

The algebraic structure of Lie algebroids is captured through the notion of \emph{Lie--Rinehart algebras}; the aromatic trees form a module over the commutative ring of aromas, acting as derivations of the aromas through the anchor map given by grafting. 
However, it turns out that the operations of divergence of trees and the grafting anchor map  are not sufficient to generate all aromas. Instead, a sufficient set of operations to generate everything is obtained by the graph versions of taking covariant exterior derivatives of vector fields and taking compositions and traces of the corresponding endomorphisms. These operations are well defined on any finite dimensional pre-Lie algebroid. However,  for the Lie--Rinehart algebra of aromas and trees the trace must be defined more carefully, since e.g.\ the identity endomorphism on aromatic trees does not have a well defined trace.

In this paper we define the notion of \emph{tracial pre-Lie Rinehart algebras} and show that the aromatic B-series arise from the free object in this category.

\section{Lie-Rinehart and pre-Lie-Rinehart algebras}
Lie-Rinehart algebras were introduced by George S. Rinehart in 1963 \cite{rinehart1963differential}. They have been thoroughly studied by several authors since then, in particular by J. H\"ubschmann who emphasized their important applications in Poisson geometry \cite{huebschmann1990}. After a brief reminder on these structures, we introduce pre-Lie-Rinehart algebras which are Lie-Rinehart algebras endowed with a flat and torsion-free connection. We also introduce the mild condition of traciality for Lie-Rinehart algebras. The main fact  (Theorem \ref{alg-trace}) states the traciality of any finite-dimensional Lie algebroid over a smooth manifold.
\subsection{Reminder on Lie-Rinehart algebras}
Let $\kk$ be a field, and let $R$ be a unital commutative $\kk$-algebra. Recall that a {\it Lie-Rinehart algebra} over $R$ consists of an $R$-module
$L$ and an $R$-linear map
$$\rho:L\mapsto \mop{Der}_{\kk}(R,R)$$
(the \textsl{anchor map}), such that
\begin{itemize}
  \item $L$ is a ${\kk}$-bilinear Lie algebra with bracket $[\![-,-]\!]$,
    \item The anchor map $\rho$ is a homomorphism of Lie algebras,
      \item For $f \in R$ and $X,Y \in L$ the Leibniz rule holds:
\begin{equation}\label{LR-Leibniz}
[\![X,fY]\!]=\bigl(\rho(X)f\bigr)Y+f[\![X,Y]\!].
\end{equation}
\end{itemize}

\begin{remark}
  In the original article \cite{rinehart1963differential},
  G.Rinehart does not state that the anchor
  map should be a Lie algebra homomorphism. However all articles on Lie-Rinehart
  algebras from the last two decades seem to require this. In the much-cited
  article by J.H\"ubschmann
  \cite{huebschmann1990} from 1990 it is not quite clear whether
  this is required,
  but again in later
  articles like \cite{Hu2} from 1998 and onwards, he 
  explicitly requires 
  the anchor map to be a Lie algebra homomorphism.

  If one does not require the anchor map to be a Lie algebra homomorphism,
  then if $\Ann(L)$ is the annihilator of $L$ in $R$, it is easy to see
  that $L$ will be a Lie-Rinehart algebra over $R/\Ann(L)$,
  with the anchor map being a Lie algebra homomorphism. In
  particular, for Lie algebroids (see Subsection \ref{subsec:liealgebroids}),
  then $\Ann(L) = 0$, and the
  anchor map will automatically be a Lie algebra homomorphism.
  \end{remark}

  A \textsl{homomorphism} $(\alpha, \gamma) : (L,R) \pil (K,S)$ of
  Lie-Rinehart algebras consists of a Lie $\kk$-algebra
  homomorphism $\alpha$ and $\kk$-algebra homomorphism $\gamma$:
  \[ \alpha : L \pil K, \quad \gamma : R \pil S \]
  such that for $f \in R$ and $X \in L$:
  \begin{itemize}
  \item $\alpha(fX) = \gamma(f)\alpha(X)$,
  \item $\gamma((\rho_L(X)\cdot f) = \rho_K(\alpha(X))\cdot \gamma(f).$
  \end{itemize}

  \noindent A \textsl{connection} on a $R$-module $\modN$ is a $R$-linear map
\begin{eqnarray*}
\nabla:L&\longrightarrow&\mop{End}_{\kk}(\modN)\\
X&\longmapsto &\nabla_X
\end{eqnarray*}
such that 
$$\nabla_X(fY)=\bigl(\rho(X).f\bigr)Y+f\nabla_XY.$$
The curvature of the connection is given by
$$R(X,Y):=[\nabla_X,\nabla_Y]-\nabla_{[\![X,Y]\!]}.$$
If $\modN=L$, the torsion of the connection is given by
$$T(X,Y):=\nabla_XY-\nabla_YX-[\![X,Y]\!].$$

The curvature vanishes if and only if $\modN$ is a module over the Lie algebra $L$ (via $\nabla$). In that case, $\modN$ is called a \textsl{module}
over the Lie-Rinehart algebra $(L,R)$. This is equivalent to the map $\nabla$ being a homomorphism of Lie algebras where $\End_{\kk}(\modN)$ is endowed with the commutator as the Lie bracket. In particular the ${\kk}$-algebra $R$ is a module over
the Lie-Rinehart algebra $(L,R)$. 

\medskip
Let $\modN$ be a $R$-module endowed with a connection $\nabla$ with respect to the Lie-Rinehart algebra $(L,R)$. The $R$-module $\mop{Hom}_R(\modN,\modN)$ can be equipped with the connection defined by (where
  $u \in \mop{Hom}_R(\modN, \modN)$, $X \in L$ and $Y \in \modN$)
\begin{equation}\label{endo-connection}
(\nablah_X u)(Y):=\nabla_X\bigl(u(Y)\bigr)-u(\nabla_XY).
\end{equation}
This connection verifies the Leibniz rule
\begin{equation}\label{leibniz}
\nablah_X(u\circ v)=\nablah_Xu\circ v+u\circ\nablah_Xv,
\end{equation}
as can be immediately checked.
\begin{proposition}
If the connection $\nabla$ on $\modN$ is flat, the corresponding connection $\nablah$ on $\mop{Hom}_R(\modN,\modN)$ given by \eqref{endo-connection} is also flat.
\end{proposition}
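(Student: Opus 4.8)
The plan is to reinterpret the defining formula \eqref{endo-connection} as the adjoint (commutator) action of $\nabla_X$ inside the associative $\kk$-algebra $\mop{End}_{\kk}(\modN)$, which reduces the flatness of $\nablah$ to the Jacobi identity together with the flatness of $\nabla$. Throughout I regard an $R$-linear endomorphism $u\in\mop{Hom}_R(\modN,\modN)$ as an element of the larger algebra $\mop{End}_{\kk}(\modN)$, and likewise each operator $\nabla_X$ as an element of $\mop{End}_{\kk}(\modN)$ (it is $\kk$-linear, since a derivation over $\kk$ annihilates $\kk$, though it is not $R$-linear).

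The key observation is that for $u\in\mop{Hom}_R(\modN,\modN)$ and $X\in L$, formula \eqref{endo-connection} is exactly a commutator in $\mop{End}_{\kk}(\modN)$:
\begin{equation*}
\nablah_X u=[\nabla_X,u]=\nabla_X\circ u-u\circ\nabla_X,
\end{equation*}
because evaluating the right-hand side on $Y\in\modN$ yields $\nabla_X\bigl(u(Y)\bigr)-u(\nabla_XY)$. In other words $\nablah_X=\mathrm{ad}_{\nabla_X}$. (That $[\nabla_X,u]$ is again $R$-linear, hence genuinely lies in $\mop{Hom}_R(\modN,\modN)$, is precisely the assertion that $\nablah$ is a well-defined connection, which is granted.)

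Granting this, I would compute the curvature of $\nablah$ directly. For $u\in\mop{Hom}_R(\modN,\modN)$,
\begin{align*}
\bigl([\nablah_X,\nablah_Y]-\nablah_{[\![X,Y]\!]}\bigr)u
&=[\nabla_X,[\nabla_Y,u]]-[\nabla_Y,[\nabla_X,u]]-[\nabla_{[\![X,Y]\!]},u].
\end{align*}
By the Jacobi identity in the Lie algebra $\bigl(\mop{End}_{\kk}(\modN),[-,-]\bigr)$, the first two terms sum to $[[\nabla_X,\nabla_Y],u]$, so the whole expression equals
\begin{equation*}
\bigl[[\nabla_X,\nabla_Y]-\nabla_{[\![X,Y]\!]},\,u\bigr]=[R(X,Y),u].
\end{equation*}
Since $\nabla$ is flat we have $R(X,Y)=0$, and hence the curvature of $\nablah$ vanishes on every $u$, which is the claim. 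Equivalently, in the compact form: $[\nablah_X,\nablah_Y]-\nablah_{[\![X,Y]\!]}=\mathrm{ad}_{[\nabla_X,\nabla_Y]-\nabla_{[\![X,Y]\!]}}=\mathrm{ad}_{R(X,Y)}=0$.

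There is no genuine obstacle here; the entire content is the identity $\nablah_X=\mathrm{ad}_{\nabla_X}$ followed by the elementary fact that $\mathrm{ad}$ is a Lie algebra homomorphism. The only points requiring care are (i) justifying that the commutator stays $R$-linear, which is subsumed in $\nablah$ being a connection, and (ii) keeping the signs in the Jacobi identity straight. Should one prefer to avoid the commutator reformulation altogether, the same conclusion follows by expanding $\bigl(([\nablah_X,\nablah_Y]-\nablah_{[\![X,Y]\!]})u\bigr)(Z)$ via \eqref{endo-connection} on an arbitrary $Z\in\modN$: the mixed terms $\nabla_X\bigl(u(\nabla_YZ)\bigr)$ and $\nabla_Y\bigl(u(\nabla_XZ)\bigr)$ cancel in pairs, leaving precisely $R(X,Y)\bigl(u(Z)\bigr)-u\bigl(R(X,Y)Z\bigr)$, which vanishes by flatness of $\nabla$.
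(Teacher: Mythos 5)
Your proof is correct and is essentially the paper's argument: the paper's ``direct computation'' expands $([\nablah_X,\nablah_Y]-\nablah_{[\![X,Y]\!]})u$ on an element $Z$ and lands exactly on $R(X,Y)\bigl(u(Z)\bigr)-u\bigl(R(X,Y)Z\bigr)=[R(X,Y),u](Z)$, which is the same identity you obtain more compactly via $\nablah_X=\mathrm{ad}_{\nabla_X}$ and the Jacobi identity. Your closing remark about expanding on an arbitrary $Z$ is literally the computation the paper writes out.
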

\begin{proof}
If $\nabla$ is flat on $\modN$, it is well-known that the corresponding $L$-module structure on $\modN$ yields a $L$-module structure on $\mop{Hom}_R(\modN,\modN)$ via \eqref{endo-connection}, hence a flat connection. To be concrete, a direct computation using \eqref{endo-connection} yields
\begin{eqnarray*}
([\nablah_X,\nablah_Y]-\nablah_{[\![X,Y]\!]}u)(Z)&=&\nabla_X\Bigl(\nabla_Y\bigl(u(Z)\bigr)-u(\nabla_YZ)\Bigr)-\nabla_Y(u\nabla_XZ)+u(\nabla_Y\nabla_XZ)\\
&&-\nabla_Y\Bigl(\nabla_X\bigl(u(Z)\bigr)-u(\nabla_XZ)\Bigr)+\nabla_X(u\nabla_YZ)-u(\nabla_X\nabla_YZ)\\
&&-\nabla_{[\![X,Y]\!]}\bigl(u(z)\bigr)+u(\nabla_{[\![X,Y]\!]}Z)\\
&=&([\nabla_X,\nabla_Y]-\nabla_{[\![X,Y]\!]})\bigl(u(Z)\bigr)-u\big(([\nabla_X,\nabla_Y]-\nabla_{[\![X,Y]\!]})(Z)\bigr).
\end{eqnarray*}
\end{proof}

\begin{definition}
  Let $(L,R)$ be a Lie-Rinehart algebra.
An $R$-module $\modN$ is {\it tracial} if there exists a connection $\nabla$ on $\modN$ and a $R$-linear map $\tau:\mop{Hom}_R(\modN,\modN)\to R$ such that
\begin{itemize}
\item $\tau(\alpha\circ\beta)=\tau(\beta\circ\alpha)\hbox{ (trace property)},$
\item $\tau$ is compatible with the connection and the anchor, i.e. \! for any $X\in L$ and $\alpha\in\mop{Hom}_R(\modN,\modN)$ we have
\begin{equation}\label{trace-two}
\tau(\nablah_X\alpha)=\rho(X).\tau(\alpha).
\end{equation}
If $\modN$ is a module over $(L,R)$, this means that $\tau$ is
a homomorphism of $(L,R)$-modules.

\end{itemize}
\end{definition}

\subsection{Aside on manifolds and Lie algebroids}
\label{subsec:liealgebroids}
Recall that a \textsl{Lie algebroid} on a smooth manifold $M$ is a Lie-Rinehart algebra over the $\mathbb C$-algebra of smooth $\mathbb C$-valued functions on $M$. It is given by the smooth sections of a vector bundle $E$, and the anchor map comes from a vector bundle morphism from $E$ to the tangent bundle $TM$. The terminology "anchor map" and the notation $\rho$ are often used for the bundle morphism in the literature on Lie algebroids.

\begin{theorem}\label{trace-main}
Let $M$ be a finite-dimensional smooth manifold, and let $V$ be a Lie algebroid on $M$. Any finite-dimensional vector bundle $W$ endowed with a $V$-connection is tracial, i.e.\ the $C^\infty(M)$-module $\modN$ of smooth sections of $W$ is tracial with respect to the Lie-Rinehart algebra $L$ of sections of $V$.
\end{theorem}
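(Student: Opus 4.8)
The plan is to take for $\tau$ the fibrewise matrix trace. Since $W$ is a vector bundle of finite rank over $M$, the Serre--Swan theorem tells us that $\modN=\Gamma(W)$ is a finitely generated projective $R$-module, where $R=C^\infty(M)$, and that the $R$-linear endomorphisms of $\modN$ are exactly the smooth sections of the endomorphism bundle: $\mop{Hom}_R(\modN,\modN)\cong\Gamma\bigl(\mop{End}(W)\bigr)$. Under this identification the fibrewise trace $\mop{tr}\colon\mop{End}(W)\to M\times\mathbb{C}$ of ordinary linear algebra induces an $R$-linear map $\tau\colon\mop{Hom}_R(\modN,\modN)\to R$, and this is my candidate. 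For the connection I take the given $V$-connection $\nabla$ on $W$ together with the induced connection $\nablah$ on $\mop{Hom}_R(\modN,\modN)$ from \eqref{endo-connection}; note that flatness is not needed for traciality.

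The $R$-linearity of $\tau$ and the trace property $\tau(\alpha\circ\beta)=\tau(\beta\circ\alpha)$ are pointwise statements: evaluated at any $m\in M$ they are exactly the $\mathbb{C}$-linearity of the matrix trace and the identity $\mop{tr}(AB)=\mop{tr}(BA)$, so they hold after passing to sections.

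The substance of the proof is the compatibility condition \eqref{trace-two}, $\tau(\nablah_X\alpha)=\rho(X).\tau(\alpha)$, which I would check in a local frame $e_1,\dots,e_n$ of $W$. Writing $\nabla_X e_i=\sum_j\omega_i^j(X)e_j$ for the connection matrix $\omega(X)$ and representing $\alpha$ by its matrix $u$ in this frame (so that $\alpha(e_j)=\sum_i u^i_j\,e_i$), a direct expansion of \eqref{endo-connection} yields, entrywise in the frame,
\[
\nablah_X\alpha=\bigl(\rho(X).u\bigr)+\omega(X)\,u-u\,\omega(X).
\]
Taking the fibrewise trace, the two connection terms combine into $\mop{tr}\bigl([\omega(X),u]\bigr)=0$, so that
\[
\tau(\nablah_X\alpha)=\mop{tr}\bigl(\rho(X).u\bigr)=\rho(X).\mop{tr}(u)=\rho(X).\tau(\alpha).
\]
Both sides are intrinsically defined, independently of the chosen frame, so this local identity is the desired global one.

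The main obstacle is conceptual rather than computational: the whole argument rests on the elementary fact that the trace of a commutator vanishes, but one must be careful to justify the isomorphism $\mop{Hom}_R(\modN,\modN)\cong\Gamma\bigl(\mop{End}(W)\bigr)$ and the frame-independence of $\tau$, both of which follow from finite-rank local triviality and partitions of unity. It is precisely the finiteness of the rank of $W$ that makes the fibrewise trace well defined---the identity endomorphism acquires the locally constant rank as its trace---in sharp contrast to the infinite-dimensional modules of aromatic trees discussed in the introduction, and this is what delivers traciality for every finite-dimensional geometric Lie algebroid.
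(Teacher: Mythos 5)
Your proposal is correct and follows essentially the same route as the paper: both take the fibrewise trace on $\Gamma(\mop{End}(W))\cong\mop{Hom}_{C^\infty(M)}(\modN,\modN)$, verify the trace property pointwise, and reduce the compatibility \eqref{trace-two} to the vanishing of the trace of a commutator with the local connection form. The paper phrases this slightly more invariantly (the difference of two connections is an endomorphism, hence $\mop{Tr}(\nablah_X\varphi)$ is connection-independent, and one checks the identity for the local trivial connection), but your frame computation with $c_X=\omega(X)$ is exactly that argument.
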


\begin{proof}
We can consider the fibrewise trace on the algebra $\mop{Hom}_{C^\infty(M)}(\modN,\modN)$ of smooth sections of the endomorphism bundle 
$\mop{End} W$: it is given fibre by fibre by the ordinary trace of an endomorphism of a finite-dimensional vector space. The trace property is obviously verified.\\

To prove the invariance property \eqref{trace-two}, choose two $V$-connections $\nabla^1$ and $\nabla^2$ on $W$. It is well known (and easily verified) that $c_X:=\nabla^2_X-\nabla^1_X$ belongs to $\mop{Hom}_{C^\infty(M)}(\modN,\modN)$, hence is a section of the vector bundle $\mop{End}(W)$. Now for any section $\varphi$ of $\mop{End}(W)$ we have for any $X\in L$ and $\alpha\in\modN$:
\begin{eqnarray*}
\bigl(\nablah^2_X\varphi\bigr)(\alpha)&=&\nabla^2_X\bigl(\varphi(\alpha)\bigr)-\varphi\bigl(\nabla^2_X(\alpha)\bigr)\\
&=&(\nabla^1_X+c_X)\bigl(\varphi(\alpha)\bigr)-\varphi\bigl(\nabla^1_X(\alpha)+c_X(\alpha)\bigr)\\
&=&\bigl(\nablah^1_X\varphi\bigr)(\alpha)+[c_X,\varphi](\alpha).
\end{eqnarray*}
The trace of a commutator vanishes, hence we get
\begin{equation}\label{invariance-tr}
\mop{Tr}(\nablah^2_X\varphi)=\mop{Tr}(\nablah^1_X\varphi).
\end{equation}
In other words, the trace of $\nablah_X\varphi$ does not depend on the choice of the connection. We can locally (i.e. on any open chart of $M$ trivializing the vector bundle $W$) choose the canonical flat connection with respect to a coordinate system, namely
$$\nabla^0_X\alpha:=\bigl(\rho(X)\alpha_1,\ldots,\rho(X)\alpha_p\bigl)$$
for which \eqref{trace-two} is obviously verified (here $p$ is the dimension of the fibre bundle $W$).  Hence, from \eqref{invariance-tr}, we get that \eqref{trace-two} is verified for any choice of connection $\nabla$. 
\end{proof}

\subsection{Tracial Lie-Rinehart algebras}
When the module $\modN$ is the Lie-Rinehart algebra itself, it may be convenient to restrict the algebra on which the trace is defined:
\begin{definition}\label{diff}
Suppose that $\modN=L$, and let us introduce the $\kk$-linear operator
$$d:L\longrightarrow \mop{Hom}_R(L,L)$$
defined by
\begin{equation}
dX(Z):=\nabla_ZX.
\end{equation}
Let the \textsl{algebra of elementary $R$-module endomorphisms} be the
$R$-module subalgebra of $\mop{Hom}_R(L,L)$ generated by $\{\nabla_{Y_1}\cdots\nabla_{Y_n}dX \colon X,Y_1,\ldots,Y_{n}\in L\}$. It will be denoted by $\emo_R(L,L)$.
\end{definition}
\begin{remark}
  The Leibniz rule \eqref{leibniz} implies that the $(L,R)$-module structure on
  $\Hom_R(L,L)$ (via the connection $\nabla$) restricts to  $\emo_R(L,L)$,
  making this an $(L,R)$-submodule of $\Hom_R(L,L)$. 
\end{remark}
\begin{definition}
A Lie-Rinehart algebra $L$ over the unital commutative $\kk$-algebra $R$ is \textsl{tracial} if there exists a connection $\nabla$ on $L$ and a $R$-linear map $\tau:\emo_R(L)\to R$ such that
\begin{itemize}
\item $\tau(\alpha\circ\beta)=\tau(\beta\circ\alpha)\hbox{ (trace property)},$
\item $\tau$ is a homomorphism of $L$-modules, i.e. for any $X\in L$ and $\alpha\in \emo_R(L)$ we have
\begin{equation}\label{trace-twotwo}
\tau(\nabla_X\alpha)=\rho(X).\tau(\alpha).
\end{equation}
\end{itemize}

In this case the \textsl{divergence} on $L$ is the composition
$\operatorname{Div} = \tau \circ d$ of
\[ L \mto{d} \emo_R(L) \mto{\tau} R. \]
\end{definition}
\begin{corollary}\label{alg-trace}
  Any finite-dimensional Lie algebroid is tracial for its natural canonical
  trace map.
\end{corollary}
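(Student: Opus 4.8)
The plan is to deduce this as the special case of Theorem~\ref{trace-main} in which the auxiliary vector bundle $W$ is the bundle $V$ underlying the Lie algebroid itself, so that $\modN=L$ becomes the module of sections of $V$ over $R=C^\infty(M)$. First I would observe that $V$, being a finite-dimensional smooth vector bundle over $M$, carries at least one $V$-connection: one may take any ordinary linear connection on $V$, or patch the local canonical flat connections $\nabla^0$ used in the proof of Theorem~\ref{trace-main} by a partition of unity. With such a connection in hand, Theorem~\ref{trace-main} applies and supplies the fibrewise trace $\mop{Tr}\colon\mop{Hom}_R(L,L)\to R$, which satisfies the trace property and the compatibility~\eqref{trace-two} with respect to the induced connection $\nablah$ of~\eqref{endo-connection} and the anchor $\rho$.

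Next I would note that the notion of a \emph{tracial Lie-Rinehart algebra} only asks for an $R$-linear trace defined on the subalgebra $\emo_R(L)\subseteq\mop{Hom}_R(L,L)$ of elementary endomorphisms from Definition~\ref{diff}, rather than on the whole of $\mop{Hom}_R(L,L)$. Accordingly, I would simply restrict $\mop{Tr}$ to $\emo_R(L)$ and check the two axioms. For the trace property, since $\emo_R(L)$ is closed under composition, both $\alpha\circ\beta$ and $\beta\circ\alpha$ lie in $\emo_R(L)$ whenever $\alpha,\beta\in\emo_R(L)$, and the equality $\mop{Tr}(\alpha\circ\beta)=\mop{Tr}(\beta\circ\alpha)$ is inherited verbatim from the full trace. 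For the compatibility~\eqref{trace-twotwo}, the Remark following Definition~\ref{diff} guarantees that $\emo_R(L)$ is an $(L,R)$-submodule of $\mop{Hom}_R(L,L)$, so $\nablah_X\alpha\in\emo_R(L)$ for every $X\in L$ and $\alpha\in\emo_R(L)$; hence $\mop{Tr}(\nablah_X\alpha)=\rho(X).\mop{Tr}(\alpha)$ follows at once from~\eqref{trace-two}. This restricted fibrewise trace is the ``natural canonical trace map'' of the statement.

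The argument is therefore essentially formal once Theorem~\ref{trace-main} is available: all of the analytic content (the existence of the fibrewise trace and its independence of the chosen connection) has already been carried out there. I expect the only point requiring a word of justification to be the applicability of Theorem~\ref{trace-main}, namely the existence of a $V$-connection on $V$ itself, which is standard; the subsequent restriction to $\emo_R(L)$ is purely algebraic and uses nothing beyond the subalgebra and $(L,R)$-submodule structure recorded in the Remark after Definition~\ref{diff}.
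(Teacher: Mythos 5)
Your proposal is correct and follows exactly the route the paper intends: the paper's own proof is the single sentence ``It is an immediate consequence of Theorem \ref{trace-main},'' and you have simply made explicit the details it leaves implicit (taking $W=V$, the existence of a $V$-connection on $V$, and the restriction of the fibrewise trace to the subalgebra $\emo_R(L)$ using the $(L,R)$-submodule structure from the Remark after Definition \ref{diff}). No gaps.
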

\begin{proof}
It is an immediate consequence of Theorem \ref{trace-main}.
\end{proof}

We also have a an analog for the differential of a function, the
first term in the De Rham complex:
\[ d : R \pil \Hom_R(L,R), \quad f \mapsto ( X \mapsto \rho(X)(f) ) .\]
Given an element $Y$ in $L$, we get a map in $\Hom_R(L,L)$ denoted
$df \cdot X$:
\begin{equation} \label{eq:LR-DRL}
  X \mapsto \rho(X)(f)\cdot Y.
  \end{equation}

  \subsection{pre-Lie-Rinehart algebras}\label{subsec:preLR}
\begin{definition}
  A \textsl{pre-Lie-Rinehart algebra} is a Lie-Rinehart algebra $L$ endowed with a flat torsion-free connection
  \[ \nabla : L \pil \End_{\kk}(L,L). \]
\end{definition}
We have then, with the notation $X\rhd Y:=\nabla_XY$:
\begin{itemize}
\item $[\![X,Y]\!]=X\rhd Y-Y\rhd X$,
\item $X\rhd(Y\rhd Z)-(X\rhd Y)\rhd Z=Y\rhd(X\rhd Z)-(Y\rhd X)\rhd Z$ (left pre-Lie relation).
\end{itemize}

A \textsl{module} over the pre-Lie-Rinehart algebra is the same as a module over
the underlying Lie-Rinehart algebra. If $N$ is a module and $n$ and element,
we
write $X \rhd n$ for $\nabla_X n$. In particular for $f \in R$ the action of the
anchor map $\rho(X).f$ is written $X \rhd f$.

\medskip
\textbf{Canonical example}  \cite{cayley1857}: Let $\kk=\mathbb R$, let $R=C^\infty(\mathbb R^d)$ and let $L$ be the space of smooth vector fields on $\mathbb R^d$. Let $X,Y\in L$, which are written in coordinates:
$$X=\sum_{i=1}^d f^i\partial_i,\hskip 12mm Y=\sum_{j=1}^d g^j\partial_j.$$
Then
\begin{equation}\label{eq:canonical}X\rhd Y=\sum_{j=1}^d\left(\sum_{i=1}^d f^i(\partial_ig^j)\right)\partial_j.\end{equation}
For a vector field
    $X = \sum_{i=1}^d f^i \partial_i$, the endomorphism $dX$ sends
   \[ \sum_{i=1}^d g^j\partial_j \mapsto \sum_{i,j = 1}^d g^j \partial_j(f^i)
      \partial_i. \]
    In particular $\partial_j \mapsto \sum_{i = 1}^d \partial_j(f^i)
    \partial_i$, so the trace of $dX$ is
    the divergence $\sum_{i = 1}^d \partial_i(f^i)$.

\begin{proposition}\label{E-PL}
  In a pre-Lie-Rinehart algebra $L$, the algebra $\emo_R(L)$ of elementary
  module homomorphisms is generated by $\{dX \colon X\in L\}$.
\end{proposition}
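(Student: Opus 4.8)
The plan is to prove the two inclusions separately. One inclusion is immediate: each $dX$ is already a generator of $\emo_R(L)$ (the case $n=0$ in Definition \ref{diff}), so the $R$-module subalgebra $A$ generated by $\{dX \colon X\in L\}$ is contained in $\emo_R(L)$. The content of the proposition is the reverse inclusion, namely that every iterated covariant derivative $\nablah_{Y_1}\cdots\nablah_{Y_n}dX$ can be rewritten as an $R$-linear combination of composites of the basic endomorphisms $dX$, so that $\emo_R(L)\subseteq A$.

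The crux is a single computation of $\nablah_Y(dX)$ that trades one application of the connection for a new $d$-generator and a composite. Using the defining formula \eqref{endo-connection} together with $dX(Z)=\nabla_Z X=Z\rhd X$, I would first obtain, for $Z\in L$,
\[ (\nablah_Y dX)(Z)=\nabla_Y(Z\rhd X)-(\nabla_Y Z)\rhd X=Y\rhd(Z\rhd X)-(Y\rhd Z)\rhd X. \]
The left pre-Lie relation says exactly that the associator $a\rhd(b\rhd c)-(a\rhd b)\rhd c$ is symmetric in $a$ and $b$; applying it with $(a,b,c)=(Y,Z,X)$ rewrites the right-hand side as $Z\rhd(Y\rhd X)-(Z\rhd Y)\rhd X$. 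Recognizing $Z\rhd(Y\rhd X)=d(Y\rhd X)(Z)$ and $(Z\rhd Y)\rhd X=dX(Z\rhd Y)=(dX\circ dY)(Z)$ yields the key identity
\[ \nablah_Y(dX)=d(Y\rhd X)-dX\circ dY. \]
Since $Y\rhd X=\nabla_Y X$ again lies in $L$, both terms on the right belong to $A$.

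With this identity in hand, the remaining task is to show that $A$ is stable under every $\nablah_Y$; the proposition then follows by induction on $n$, the base case being $dX\in A$. To check stability, note that a general element of $A$ is an $R$-linear combination of composites $dX_1\circ\cdots\circ dX_k$ of generators (the identity, if present, is annihilated by $\nablah_Y$). On a single composite the Leibniz rule \eqref{leibniz} distributes $\nablah_Y$ over the factors, producing a sum of composites in which exactly one factor $dX_i$ is replaced by $\nablah_Y(dX_i)\in A$; closure of $A$ under composition keeps each summand in $A$. For an $R$-linear combination, the fact that $\nablah$ is a connection gives $\nablah_Y(fu)=(\rho(Y).f)\,u+f\,\nablah_Y u$, whose first term lies in $A$ because $A$ is an $R$-module and whose second lies in $A$ by the composite case. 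Hence $\nablah_Y A\subseteq A$, and since $A$ contains all $dX$ it contains every $\nablah_{Y_1}\cdots\nablah_{Y_n}dX$, giving $\emo_R(L)\subseteq A$ and therefore equality.

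The one genuinely substantive point is the middle paragraph: the reduction $\nablah_Y(dX)=d(Y\rhd X)-dX\circ dY$ is where flatness and torsion-freeness, encoded in the pre-Lie relation, enter, while everything else is formal bookkeeping with the Leibniz rule. I expect the only place demanding care is tracking which arguments the associator symmetry is applied to, since a misplaced index there would spoil the identification of the two terms as a $d$-generator and a composite.
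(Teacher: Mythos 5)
Your proof is correct and follows essentially the same route as the paper's: the same key identity $\nablah_Y(dX)=d(Y\rhd X)-dX\circ dY$ derived from the left pre-Lie relation, followed by the same induction via the Leibniz rule \eqref{leibniz}. Your extra care with $R$-linear combinations and the connection property $\nablah_Y(fu)=(\rho(Y).f)u+f\nablah_Yu$ only makes explicit what the paper leaves implicit.
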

\begin{proof}
In view of Definition \ref{diff}, we first show that for any $X,Y\in L$, the endomorphism $\nabla_Y(dX)$ is obtained by linear combinations of products of endomorphisms of the form $dZ,\, Z\in L$. It derives immediately from the left pre-Lie relation, via the following computation (recall \eqref{endo-connection}):
\begin{eqnarray*}
(\nabla_YdX)(Z)&=&\nabla_Y\bigl(dX(Z)\bigr)-dX(\nabla_YZ)\\
&=&\nabla_Y(Z\rhd X)-(\nabla_YZ)\rhd X\\
&=&Y\rhd(Z\rhd X)-(Y\rhd Z)\rhd X\\
&=&Z\rhd(Y\rhd X)-(Z\rhd Y)\rhd X\\
               &=&\big(d(Y\rhd X)-dX\circ dY\bigr)(Z).                   
\end{eqnarray*}
To then show further that $\nabla_{Y_1} \nabla_{Y_2} dX $ is a linear combination
of products of endomorphisms, we use the Leibniz rule
\[ \nabla_{Y_1} (dX \circ dY_2) = (\nabla_{Y_1} dX) \circ dY_2 +
  dX \circ \nabla_{Y_1} dY_2.\]
In this way we may continue.
\end{proof}

\begin{proposition} \label{prop:LR-dxyr} Let $L$ be a pre-Lie-Rinehart algebra.
  \begin{itemize}
    \item[a.]
  For any $X,Y \in L$:
  \begin{equation*} \label{eq:LR-dxy}
 X \rhd dY = d(X\rhd Y) -  dY\circ dX.
\end{equation*}
\item[b.] For  $f \in R$ (recall \eqref{eq:LR-DRL} for $df \cdot Y$):
\begin{equation*} \label{eq:LR-drx}
  X \rhd (df \cdot Y) = df \cdot (X \rhd Y) + d(X \rhd f) \cdot Y -
  (df \cdot Y) \circ dX.
\end{equation*}
\end{itemize}
\end{proposition}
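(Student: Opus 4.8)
The plan is to prove both identities by evaluating each side, regarded as an endomorphism of $L$, on an arbitrary element $Z \in L$ and comparing the resulting elements of $L$. Throughout I will use the definition $dW(Z) = \nabla_Z W = Z \rhd W$, the endomorphism connection $(\nablah_X u)(Z) = \nabla_X\bigl(u(Z)\bigr) - u(\nabla_X Z)$ from \eqref{endo-connection} (which is precisely what $X \rhd u$ denotes for the module $\mop{Hom}_R(L,L)$), and the scalar-module Leibniz rule $\nabla_X(g W) = (X \rhd g) W + g\,\nabla_X W$ for $g \in R$ and $W \in L$.

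For part a, I would observe that this is exactly the computation already carried out in the proof of Proposition \ref{E-PL}, with the roles of $X$ and $Y$ interchanged. Evaluating $(X \rhd dY)(Z)$ via \eqref{endo-connection} gives $X \rhd (Z \rhd Y) - (X \rhd Z) \rhd Y$; the left pre-Lie relation rewrites this as $Z \rhd (X \rhd Y) - (Z \rhd X) \rhd Y$, which is precisely $\bigl(d(X \rhd Y) - dY \circ dX\bigr)(Z)$. Since $Z$ is arbitrary, part a follows, using nothing beyond the pre-Lie relation.

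For part b, I would first expand the left-hand side. Writing $u = df \cdot Y$, so that $u(Z) = (Z \rhd f) Y$, formula \eqref{endo-connection} gives
\[ (X \rhd u)(Z) = X \rhd \bigl((Z \rhd f) Y\bigr) - \bigl((X \rhd Z) \rhd f\bigr) Y. \]
Applying the Leibniz rule to the first term splits off a summand $(Z \rhd f)(X \rhd Y)$, which matches $\bigl(df \cdot (X \rhd Y)\bigr)(Z)$ on the right-hand side. The remaining terms on both sides are scalar multiples of $Y$, and after cancelling the matched summand the identity reduces to the scalar equality
\[ X \rhd (Z \rhd f) - (X \rhd Z) \rhd f = Z \rhd (X \rhd f) - (Z \rhd X) \rhd f. \]

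This last equality is the heart of the argument, and I expect it to be the only genuinely non-formal step. Rearranging, it asserts that $[\rho(X), \rho(Z)](f) = \rho(X \rhd Z - Z \rhd X)(f)$. The torsion-freeness of the connection identifies $X \rhd Z - Z \rhd X$ with the Lie-Rinehart bracket $[\![X, Z]\!]$, while the hypothesis that the anchor $\rho$ is a homomorphism of Lie algebras gives $\rho([\![X, Z]\!]) = [\rho(X), \rho(Z)]$; combining the two yields the equality, and since $Z$ is arbitrary, part b follows. The main point to keep in mind is thus that part b genuinely invokes both the Lie-Rinehart axiom that $\rho$ be a Lie homomorphism and the pre-Lie refinement that the connection be torsion-free, whereas part a needs only the left pre-Lie relation.
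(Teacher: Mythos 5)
Your proof is correct and follows essentially the same route as the paper: both parts are verified by evaluating on an arbitrary $Z$, applying \eqref{endo-connection} and the Leibniz rule, and invoking the pre-Lie-type identity. The only difference is that in part b you explicitly justify the key scalar identity $X \rhd (Z \rhd f) - (X \rhd Z) \rhd f = Z \rhd (X \rhd f) - (Z \rhd X) \rhd f$ via torsion-freeness and the anchor being a Lie algebra homomorphism, a step the paper performs silently; your accounting of which axioms each part actually uses is accurate and a worthwhile clarification.
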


\begin{proof} a. Using \eqref{endo-connection}:
  \begin{eqnarray*}
    (X \rhd dY) (Z) & = & X \rhd (Z \rhd Y) - (X \rhd Z) \rhd Y \\
    & = & Z\rhd (X \rhd Y) - (Z \rhd X) \rhd Y \\
    & = & d (X \rhd Y) (Z) - dY \circ dX (Z)
    \end{eqnarray*}
\medskip
b. Again using \eqref{endo-connection} this map sends $Z$ to:
\begin{eqnarray*}  \big( X \rhd (df \cdot Y) \big) (Z)& = &
X \rhd \big((Z \rhd f)Y\big) - (df \cdot Y) (X \rhd Z) \\
\text{(connection property)}
&= & \big(X \rhd (Z \rhd f)\big)Y + (Z \rhd f)(X \rhd Y) -
\big((X \rhd Z) \rhd f\big)Y \\
  &= & (Z \rhd (X \rhd f))Y - ((Z \rhd X) \rhd f)Y + (Z \rhd f)(X \rhd Y).
\end{eqnarray*}
This is the map: 
\[ d (X \rhd f)\cdot Y - (df \cdot Y) \circ dX + df \cdot (X \rhd Y). \]
\end{proof}

\begin{definition} \label{def:LR-TF}
  Let $(L,R)$ and $(K,S)$ be tracial pre-Lie-Rinehart algebras, and
  $(\alpha, \gamma) : (L,R) \pil (K,S)$ a homomorphism of pre-Lie-Rinehart
  algebras. For each $X \in L$ there is a commutative diagram:
  \[ \xymatrix{L \ar[r]^{dX} \ar[d]_{\alpha}  & L \ar[d]^{\alpha} \\
    K \ar[r]^{d\alpha(X)} & K.} \]
  An elementary endomorphism $\phi : L \pil L$ is an $R$-linear combination
  of compositions $dX_1 \circ \cdots \circ dX_r$. It induces an elementary
  endomorphism $\psi : K \pil K$ which is the corresponding $R$-linear
  combination of compositions of $d\alpha(X_i)$'s.
  {\em Note:} This $\psi$ may not be unique since
  expressing $\phi$ as an $R$-linear combination of compositions may not be
  done uniquely. For instance it could be that $dX$ is the zero map, while
  $d\alpha(X)$ is {\it not} the zero map.

  The homomorphism $(\alpha,\gamma)$ is a {\it homomorphism of tracial
    pre-Lie algebras} if for each elementary endomorphism $\phi$
  the trace of $\phi$ maps to the trace of $\psi$:
  $\gamma(\tau \phi) = \tau \psi$. (This is regardless of which
  $\psi$ that corresponds to $\phi$.)
\end{definition} 
  
\section{Aromatic trees}
 In this section we define rooted trees, aromas and aromatic trees, the latter being the relevant combinatorial objects for building up the free pre-Lie-Rinehart algebra.
\subsection{Rooted trees and aromas}\label{sect:tf}
Let $\C $ be a finite set, whose elements we shall think of as colors.
We introduce some notation:

\begin{definition} $\tre_\C$ is the vector space freely generated by rooted trees whose vertices are colored 
  with elements of $\C$. We denote by $\vtre_\C$ the vector space freely generated by pairs $(v,t)$ where $t$ is a $\C$-colored tree and
  $v$ is a vertex of $t$.
\end{definition}

There is an injective map
\[ V_\C \pil \End_{\kk}(\tre_\C), \quad (v,t) \mapsto
  (s \mapsto s \rhd_v t) \]
where $\rhd_v$ is grafting the root of $s$ on the
vertex $v$.

The composition $\beta \circ \alpha$ of maps $\beta, \alpha$ in
$\End_{\kk}(\tre_\C)$ induces a multiplication (composition) $\circ$
on $V_C$ given by $(v,t) \circ (u,s) = (u,s \rhd_v t)$.
We may then identify $\vtre_{\C}$ as a $\kk$-subalgebra of
$\End_{\kk}(T_\C)$. Then
\begin{equation} \label{eq:TTT}
  d : T_\C \pil \End_{\kk}(T_\C), \quad t \mapsto \sum_{v \in t} (v,t).
  \end{equation}

\begin{definition}
A connected 
directed graph with vertices colored by $\C$ and where each vertex has precisely
one outgoing edge is called a {\it $\C$-colored aroma} or just an {\it aroma}
since we will only consider this situation.
It consists of a central cycle with trees attached to the vertices of this
cycle. The arrows of each tree are oriented towards the cycle, which will be oriented counterclockwise by convention when an aroma is drawn in the two-dimensional plane. We let $\aro_\C$ be the vector space freely generated by $\C$-colored
aromas. See Figure \ref{fig:aroma}
where the first four connected graphs are aromas.

\end{definition}

Now consider the linear map
\begin{equation} \label{eq:trmap}
\tau : \vtre_{\C} \mto{} \aro_{\C} \end{equation}
which maps the pair $(v,t)$ to an 
aroma by joining the root of $t$ to the vertex $v$.

\begin{lemma}\label{univ-trace}
  The vector space $\aro_{\C}$ spanned by the aromas can be naturally identified with the quotient $\vtre_{\C}/[\vtre_{\C},\vtre_{\C}]$, where $[\vtre_{\C},\vtre_{\C}]$ is the vector space spanned by the commutators in $\vtre_{\C}$, so that the map $\tau$ becomes the natural projection from $\vtre_{\C}$ onto
  $\vtre_{\C}/[\vtre_{\C},\vtre_{\C}]$.
\end{lemma}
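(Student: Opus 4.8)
The plan is to prove that the map $\tau$ descends to an isomorphism $\bar\tau : \vtre_\C/[\vtre_\C,\vtre_\C] \to \aro_\C$. The first thing I would verify is that $\tau$ is a \emph{trace} on the associative algebra $(\vtre_\C,\circ)$, i.e. $\tau(a\circ b)=\tau(b\circ a)$ for basis elements $a=(v,t)$ and $b=(u,s)$. Using $(v,t)\circ(u,s)=(u,s\rhd_v t)$, the aroma $\tau(u,s\rhd_v t)$ is the directed graph on the disjoint vertex sets of $t$ and $s$ carrying all tree edges together with the two cross edges $\mathrm{root}(s)\to v$ and $\mathrm{root}(t)\to u$; the symmetric computation for $b\circ a=(v,t\rhd_u s)$ yields the very same graph. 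Hence every commutator lies in $\ker\tau$, so $[\vtre_\C,\vtre_\C]\subseteq\ker\tau$ and $\bar\tau$ is well defined.

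Next I would pin down $\ker\tau$ precisely. Since $\tau$ sends each basis pair $(v,t)$ to a single basis aroma, it is a surjection of bases, so $\ker\tau=\bigoplus_A K_A$, where $A$ ranges over aromas and $K_A$ is the subspace of the fibre span spanned by differences of basis pairs mapping to $A$. Surjectivity and the shape of each fibre come from the \emph{cutting} construction: an aroma $A$ has a unique central cycle $c_1\to c_2\to\cdots\to c_\ell\to c_1$, and deleting the edge $c_i\to c_{i+1}$ turns $A$ into a rooted tree $T_i$ with root $c_i$ whose marked vertex $c_{i+1}$ recovers $A$ under $\tau$; conversely every pair in $\tau^{-1}(A)$ arises this way. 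Thus the fibre is exactly the set of cuts $p_i:=(c_{i+1},T_i)$, $i=1,\dots,\ell$, and injectivity of $\bar\tau$ is equivalent to showing that all cuts of a fixed aroma are congruent modulo $[\vtre_\C,\vtre_\C]$: this gives $K_A\subseteq[\vtre_\C,\vtre_\C]$ for each $A$, and the two inclusions combine to $\ker\tau=[\vtre_\C,\vtre_\C]$.

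The heart of the proof, and the step I expect to be the main obstacle, is therefore showing that two consecutive cuts $p_i$ and $p_{i+1}$ differ by a commutator. I would deduce this from the relation
\[ (u,s\rhd_v t)\equiv(v,t\rhd_u s)\pmod{[\vtre_\C,\vtre_\C]}, \]
which is precisely the commutator $(v,t)\circ(u,s)-(u,s)\circ(v,t)$. To apply it I would cut \emph{both} cycle edges $c_i\to c_{i+1}$ and $c_{i+1}\to c_{i+2}$, which detaches the tree $s$ hanging at $c_{i+1}$ (rooted at $c_{i+1}$) from the remaining tree $t$ rooted at $c_i$ (which contains $c_{i+2}$). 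Re-attaching $s$ at $c_{i+2}$ reconstitutes $T_i$, giving $p_i=(c_{i+1},s\rhd_{c_{i+2}}t)$ with $u=c_{i+1}=\mathrm{root}(s)$ and $v=c_{i+2}\in t$; re-attaching $t$ at the root $c_{i+1}$ of $s$ reconstitutes $T_{i+1}$, giving $p_{i+1}=(c_{i+2},t\rhd_{c_{i+1}}s)$. These are exactly the two sides of the displayed relation, so $p_i\equiv p_{i+1}$, and telescoping around the cycle shows all $p_i$ are congruent.

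The only genuinely delicate point is the edge-and-orientation bookkeeping in this double cut: one must check that the graftings re-add the correct cross edges (with all edges oriented towards the respective roots), so that $s\rhd_{c_{i+2}}t$ and $t\rhd_{c_{i+1}}s$ really equal $T_i$ and $T_{i+1}$ as rooted $\C$-colored trees with their marked vertices. Once this is confirmed, combining the trace property (which gives $[\vtre_\C,\vtre_\C]\subseteq\ker\tau$) with the telescoping congruence (which gives $\ker\tau\subseteq[\vtre_\C,\vtre_\C]$) shows that $\bar\tau$ is an isomorphism, identifying $\aro_\C$ with $\vtre_\C/[\vtre_\C,\vtre_\C]$ and $\tau$ with the natural projection.
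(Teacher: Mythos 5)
Your proof is correct and follows essentially the same route as the paper's: both establish the trace property of $\tau$, identify the fibre of $\tau$ over an aroma with the cuts of its central cycle, show that any two elements of a fibre differ by commutators, and conclude with the same argument that $\ker\tau$ is spanned by differences of basis elements within a fibre. The only cosmetic difference is that you exhibit the commutator relating two consecutive cuts directly via a double cut of the cycle, whereas the paper phrases the same fact as cyclic invariance of the full path decomposition $(v_1,t_1)\circ\cdots\circ(v_j,t_j)$ of a marked tree.
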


\begin{proof} An aroma has a unique interior cycle. Let $r_1, \cdots r_n$ be
  the vertices on this cycle. At $r_{j-1}$ there is a tree $t^\prime_j$ with root $r_{j-1}$.
  Let $t_j$ be this tree with $r_{j-1}$ grafted onto $r_j$, so $r_j$ is
  the root of $t_j$. 
We may the write the aroma as:
  \begin{equation}
a=\tau\big((r_1,t_1)\circ \cdots \circ (r_n,t_n)\big).
\end{equation}
and this is invariant under any cyclic permutation of the elements $(r_i,t_i)$.
On the other hand, any tree $t$ with marked point $v$ admits the decomposition:
\begin{equation}
(v,t)=(v_1,t_1) \circ \cdots \circ (v_j,t_j)
\end{equation}
where $v_1$ (resp. $v_j$) is the root of $t$ (resp. the marked vertex $v$) and $(v_1,v_2,\ldots,v_j)$ is the path from the root to $v$ in $t$. Each vertex $v_i$ of this path is the root of the tree $t_i$. Now if
$(v',t')=(v_{j+1},t_{j+1})\circ \cdots \circ (v_{j+k},t_{j+k})$ is another tree with
marked vertex, we have
\begin{eqnarray*}
(v,t)\circ (v',t')&=&(v_1,t_1)\circ \cdots \circ (v_{j+k},t_{j+k}) \hbox{ and}\\
  (v',t')\circ (v,t)&=&(v_{j+1},t_{j+1})\circ \cdots \circ
                        (v_{j+k},t_{j+k})\circ (v_1,t_1)\circ \cdots \circ (v_j,t_j).
\end{eqnarray*}
The trace property
$\tau\big((v,t) \circ (v',t')\big)=\tau\big((v',t')\circ (v,t)\big)$
is then obvious by cyclic invariance of the decomposition of an aroma. Now any aroma is the image by $\tau$ of at most $n$ trees with marked points, where $n$ is the length of the cycle. It is clear that two such trees admit the same decomposition as above modulo cyclic permutation, which implies that they differ by a commutator. Now we have to prove that any element $T\in \vtre_{\C}$ with $\tau(T)=0$ is a linear combination of commutators. Decomposing $T$ in the basis of trees with one marked points:
\begin{eqnarray*}
T&=&\sum_{(v,t)}\alpha_{(v,t)}(v,t)\\
&=&\sum_{a \scalebox{0.7}{\hbox{ aroma}}}\ \sum_{(v,t),\, \tau(v,t)=a}\alpha_{(v,t)}(v,t),\\
\end{eqnarray*}
from $\tau(T)=0$ we get $\sum_{(v,t),\, \tau(v,t)=a}\alpha_{(v,t)}=0$ for any aroma $a$. Hence the sum $\sum_{(v,t),\ \tau(v,t)=a}\alpha_{(v,t)}(v,t)$ is a sum of commutators for any aroma $a$, which proves that $T$ is also a sum of commutators.
\end{proof}

 The canonical embedding of $\C$ into $\tre_{\C}$ is given by $c\mapsto \bullet_c$. It is well-known \cite{chapoton2001pre, dzhumadildaev2002trees} that $\tre_{\C}$ with grafting of trees as the operation $\rhd$ is
 the free pre-Lie algebra on the set $\C$. Then $(\tre_\C,k)$ becomes
 a pre-Lie-Rinehart algebra, with anchor map zero.

\begin{lemma} \label{lem:EC}
  The algebra $\emo_{k}(\tre_{\C})$ of elementary module morphisms of Definition \ref{diff}, for the pre-Lie-Rinehart algebra $(\tre_\C,k)$,
  coincides with the algebra $\vtre_{\C}$ of  trees with one marked point.
\end{lemma}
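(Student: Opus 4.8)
The plan is to prove that the two algebras coincide as $\kk$-subalgebras of $\End_{\kk}(\tre_{\C})$ by establishing the two inclusions separately. For $\emo_{\kk}(\tre_{\C}) \subseteq \vtre_{\C}$ I would invoke Proposition \ref{E-PL}, which says that $\emo_{\kk}(\tre_{\C})$ is generated as a $\kk$-algebra by the endomorphisms $dX$, $X \in \tre_{\C}$. By \eqref{eq:TTT} we have $dt = \sum_{v \in t}(v,t)$, a $\kk$-linear combination of marked trees, so every generator already lies in $\vtre_{\C}$. Since $\vtre_{\C}$ is a $\kk$-subalgebra of $\End_{\kk}(\tre_{\C})$, it is closed under composition and linear combination, hence contains the whole subalgebra generated by the $dX$. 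This direction is routine.

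The substance is the reverse inclusion $\vtre_{\C} \subseteq \emo_{\kk}(\tre_{\C})$, which amounts to writing every \emph{single} marked tree $(v,t)$ as a $\kk$-linear combination of composites $dZ_1 \circ \cdots \circ dZ_r$. I would argue by induction on the number $n$ of vertices of $t$, the base case $n=1$ being $(v,\bullet_c) = d\bullet_c$. In the inductive step I distinguish whether or not $v$ is the root of $t$. If $v$ is \emph{not} the root, the path from the root to $v$ has at least two vertices, and I would use the decomposition $(v,t) = (v_1,t_1)\circ\cdots\circ(v_j,t_j)$ established in the proof of Lemma \ref{univ-trace}, in which the trees $t_i$ partition the vertices of $t$ into $j \geq 2$ nonempty blocks. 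Each $t_i$ then has strictly fewer than $n$ vertices, so every factor $(v_i,t_i)$ lies in $\emo_{\kk}(\tre_{\C})$ by the induction hypothesis; as $\emo_{\kk}(\tre_{\C})$ is closed under composition, so does $(v,t)$.

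The remaining case, where $v$ is the root $r$, is the genuine obstacle: here the path decomposition is trivial and yields no reduction. The key observation is that $d$ records the sum over \emph{all} marked points, $dt = (r,t) + \sum_{w \neq r}(w,t)$, whence $(r,t) = dt - \sum_{w \neq r}(w,t)$. Now $dt$ is a generator of $\emo_{\kk}(\tre_{\C})$, and each $(w,t)$ with $w \neq r$ has its marked vertex off the root, so it lies in $\emo_{\kk}(\tre_{\C})$ by the non-root case just treated — which appeals only to the induction hypothesis at sizes strictly below $n$, so there is no circularity. Subtracting, $(r,t) \in \emo_{\kk}(\tre_{\C})$, which closes the induction. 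The two points I would check carefully are precisely this absence of circularity and the fact that the blocks in the path decomposition really do drop below $n$ vertices; beyond that it is bookkeeping with the composition rule $(v,t)\circ(u,s) = (u, s\rhd_v t)$.
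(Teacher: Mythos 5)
Your proof is correct and follows essentially the same route as the paper's: the inclusion $\vtre_{\C} \subseteq \emo_{\kk}(\tre_{\C})$ is established by induction on the tree size, treating a root-marked tree as $dt$ minus the non-root-marked ones and a non-root-marked tree as a composition of strictly smaller marked trees (the paper uses the two-factor decomposition $(v,t) = (w,t')\circ(v,t_v)$ rather than the full path decomposition, but this is the same idea). Your explicit handling of the easy inclusion via Proposition \ref{E-PL} and of the non-circularity of the root case is a welcome clarification of points the paper leaves implicit.
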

\begin{proof}
  Since $dt = \sum_{v \in t}(v,t)$ by \eqref{eq:TTT}
  we need only to show that each
marked tree $(v,t)$ is in $\emo_{k}(\tre_{\C})$. Let $t_v$ be the
  subtree of $t$ which has $v$ as root. If $v$ is not the root of $t$, it is
  attached to a node $w$. Take $t_v$ away from $t$ and let $t^\prime$ be the
  resulting tree.
  Then $(v,t) = (w,t^\prime) \circ (v,t_v)$. We now show by
  induction on the number of nodes of i) $|t|$ and ii) $|t_v|$, that $(v,t)$
  is in $\emo_{k}(\tre_{\C})$.

  i) If the marked tree is $(\bullet_c, \bullet_c)$, then
  it is $d(\bullet_c)$ and is in $\emo_{k}(\tre_{\C})$.

  ii) If $v$ is a root, then
  \[ d(t) = (v,t) + \sum_{w \neq v}(w,t).\]
  The left term is in $\emo_{k}(\tre_{\C})$, and the right term
  also by induction.
  
  iii) If $v$ is not a root then both $(w,t^\prime)$ and $(v,t_v)$ are in
  $\emo_{k}(\tre_{\C})$ by induction, and so also $(v,t)$.
\end{proof}

As a result $V_\C$ is a $T_\C$-module by grafting:
\[ s \rhd (v,t) = (v,s \rhd t), \]
where the latter is a sum of pairs $(v,t_i)$ coming from that $s \rhd t$ is
a sum of trees $t_i$. The aromas $A_\C$ also form a $T_\C$-module
by grafting the trees on all vertices in an aroma. Lastly the map
$\tau : V_\C \pil A_\C$ is a $T_\C$-module map.

\subsection{The free pre-Lie algebra}
Let $L$ be a pre-Lie algebra over the field $\kk$.
The pre-Lie algebra $T_\C$ has the universal property
that given any map $\C \pil L$ there is a unique morphism of pre-Lie
algebras $\tre_{\C} \pil L$ such that the diagram below commutes:
\[ \xymatrix{
  & \C \ar[dr] \ar[dl] & \\
\tre_{\C} \ar[rr]^\alpha & & L.}
\]

\begin{theorem}  \label{thm:TEA}
  Let $L$ be a tracial pre-Lie-Rinehart algebra over the $\kk$-algebra $R$, with trace map $\eend_R(L,L) \mto{\tau} R$.
  \begin{itemize}
  \item[a.] Given a set map $\psi:\C  \pil L$, the unique pre-Lie algebra
    homomorphism
$\tre_{\C} \mto{\alpha} L$ such that $\alpha(\bullet_c)=\psi(c)$ for any $c\in\C$ induces a unique 
morphism of associative algebras $\vtre_\C \mto{\beta} \emo_R(L)$ such that
the following diagram commutes:
\begin{equation} \label{eq:univ-PE} \begin{CD}
\tre_{\C} @>{d}>> \vtre_\C \\
@V{\alpha}VV @V{\beta}VV \\
L   @>{d}>> \emo_{R}(L).
   \end{CD}
\end{equation}

\item[b.] Moreover, there is a unique linear map $\gamma$
extending the diagram \eqref{eq:univ-PE}
to a commutative diagram
\begin{equation*} \label{eq:univ-TEA} \begin{CD}
\tre_{\C} @>{d}>> \vtre_\C @>{\tau}>> \aro_{\C}\\
@V{\alpha}VV  @V{\beta}VV @V{\gamma}VV \\
L   @>{d}>> \emo_R(L) @>{\tau}>> R.
   \end{CD}
\end{equation*}
\item[c.] These maps fulfill the following for an aroma $a$, tree $t$, and
$\phi \in V_\C \sus \End_{\kk}(T_\C)$:
\begin{itemize}
\item[i.] $\beta(\phi)(\alpha(t)) = \alpha(\phi(t))$,
\item[ii.] $\beta(t \rhd \phi) = \alpha(t) \rhd \beta(\phi)$,
\item[iii.] $\gamma(t \rhd a) = \alpha(t) \rhd \gamma(a)$.
  \end{itemize}
\end{itemize}
\end{theorem}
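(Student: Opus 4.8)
The plan is to treat the three parts in order, concentrating all the difficulty in the construction of $\beta$ in part a; parts b and c are then formal consequences of the trace property and the module-map identities already available.

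\emph{Part a.} Since $\alpha:\tre_\C\to L$ is a pre-Lie homomorphism, the composite $t\mapsto d\alpha(t)$ is a well-defined $\kk$-linear map of $\tre_\C$ into the subalgebra $\emo_R(L)$, which by Proposition~\ref{E-PL} is generated by the $dX$. Commutativity of the square forces $\beta(dt)=d\alpha(t)$, and since the $dt$ generate $\vtre_\C$ (Lemma~\ref{lem:EC}, Proposition~\ref{E-PL}) this determines $\beta$ uniquely \emph{once it exists}. Its existence is the crux, and I would establish it by proving that $\vtre_\C$ is the free (non-unital) associative $\kk$-algebra on $\tre_\C$. Concretely, the path-from-root-to-mark decomposition $(v,t)=(v_1,t_1)\circ\cdots\circ(v_j,t_j)$ of Lemma~\ref{univ-trace} gives a bijection between the basis of marked trees and the nonempty words in a basis of $\tre_\C$, each letter being a root-marked tree; under $(v,t)\circ(u,s)=(u,s\rhd_v t)$ concatenation of words corresponds to the product, so $\vtre_\C\cong T^{\ge 1}(\tre_\C)$. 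Because $dt=(\text{root},t)+(\text{strictly longer words})$, the family $\{dt\}$ is also a free generating set, whence $dt\mapsto d\alpha(t)$ extends uniquely to an associative-algebra homomorphism $\beta$ with no relations left to check. Equivalently, every relation among the $dt$ in $\vtre_\C$ is a formal consequence of the pre-Lie identity, hence is preserved by $\alpha$.

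\emph{Part b.} As $\tau:\vtre_\C\to\aro_\C$ is the surjection onto $\vtre_\C/[\vtre_\C,\vtre_\C]$ (Lemma~\ref{univ-trace}), any $\gamma$ with $\gamma\circ\tau=\tau\circ\beta$ is unique, and I would simply set $\gamma(\tau\phi):=\tau(\beta\phi)$. This is well-defined because $\beta$ is an algebra homomorphism and the trace map on $\emo_R(L)$ kills commutators: $\tau(\beta[\phi,\psi])=\tau[\beta\phi,\beta\psi]=0$. Linearity over $\kk$ is immediate.

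\emph{Part c.} For (i) I would prove $\beta(\phi)(\alpha(t))=\alpha(\phi(t))$ for $\phi=dt_1\circ\cdots\circ dt_r$ by induction on $r$, the base case being $d\alpha(s)(\alpha(t))=\alpha(t)\rhd\alpha(s)=\alpha(t\rhd s)=\alpha(ds(t))$ since $\alpha$ is a pre-Lie homomorphism, then extend by linearity. For (ii), reduce to $\phi=ds$ and compute, using Proposition~\ref{prop:LR-dxyr}(a) upstairs and downstairs together with $\alpha(t\rhd s)=\alpha(t)\rhd\alpha(s)$, that $\beta(t\rhd ds)=\beta\bigl(d(t\rhd s)-ds\circ dt\bigr)=d\alpha(t\rhd s)-d\alpha(s)\circ d\alpha(t)=\alpha(t)\rhd d\alpha(s)$; the general case follows since $\nabla$ acts by derivations for $\circ$ (the Leibniz rule \eqref{leibniz}) and $\beta$ is multiplicative. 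For (iii), write $a=\tau(\phi)$ and use that $\tau:\vtre_\C\to\aro_\C$ is a $\tre_\C$-module map, obtaining $\gamma(t\rhd a)=\tau(\beta(t\rhd\phi))=\tau(\alpha(t)\rhd\beta\phi)=\rho(\alpha(t)).\tau(\beta\phi)=\alpha(t)\rhd\gamma(a)$, where the steps combine part b, item (ii), and the trace–connection compatibility \eqref{trace-twotwo}.

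The main obstacle is thus the well-definedness of $\beta$ in part a, i.e.\ the freeness of $\vtre_\C$ on the $dt$; once this is in hand every remaining commutativity and module-map identity reduces to a short computation with Propositions~\ref{E-PL} and~\ref{prop:LR-dxyr}, the trace property, and \eqref{trace-twotwo}.
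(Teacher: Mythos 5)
Your argument is correct, and your parts (b) and (c) coincide with the paper's proof: $\gamma$ is forced to be $\overline{\tau}\circ\overline{\beta}$ on $\vtre_\C/[\vtre_\C,\vtre_\C]$ via Lemma \ref{univ-trace}, c.i and c.ii are checked on generators $ds$ using Proposition \ref{prop:LR-dxyr}a and extended multiplicatively, and c.iii chains part b, c.ii and the compatibility \eqref{trace-twotwo}. Where you genuinely diverge is part (a). The paper constructs $\beta$ by induction on the number of vertices: it sets $\beta(v,t)=\beta(v'',t'')\circ\beta(v,t')$ for a non-root mark, verifies by hand that this is independent of the chosen cut, and defines $\beta$ on root-marked trees recursively by $\beta(v,t)=d\alpha(t)-\sum_{v'\neq v}\beta(v',t)$. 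You instead promote the path decomposition of Lemma \ref{univ-trace} to the structural statement that $\vtre_\C$ is the free non-unital associative algebra $T^{\geq 1}(\tre_\C)$ on root-marked trees, and then note that the substitution $(\mathrm{root},t)\mapsto dt$ is unitriangular for the word-length filtration within each vertex-degree component (finite-dimensional since $\C$ is finite), hence an algebra automorphism; so $\{dt\}$ is again a free generating set and $dt\mapsto d\alpha(t)$ extends uniquely with no relations to check. This is a cleaner packaging of the same combinatorics: the paper's independence-of-decomposition check is precisely the uniqueness of the word decomposition underlying your freeness claim, and your triangularity step replaces the paper's recursive formula for $\beta$ on root-marked trees; what you gain is that existence and uniqueness of $\beta$ come for free from the universal property of the tensor algebra. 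One sentence you should drop is the claim that ``every relation among the $dt$ is a formal consequence of the pre-Lie identity'': on your own account there are no such relations beyond $\kk$-linearity of $d$, and the pre-Lie identity plays no role in part (a) --- it only enters in part c.ii through Proposition \ref{prop:LR-dxyr}.
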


\begin{proof}
{\bf Part a.} Any tree $(v,t)$ with one marked point different from the root can be written as
\begin{equation}
(v,t)=(v'',t'') \circ (v,t'),
\end{equation}
where the associative product on $\vtre_{\C}$ has been described in Paragraph \ref{sect:tf}. Here $t'$ is any upper sub tree containing the marked vertex $v$, and $t''$ is the remaining tree, on which the marked vertex $v''$ comes from the vertex immediately below the root of $t'$. We then proceed by induction on the number of vertices: if $t$ is reduced to the vertex $v$ colored by $c\in \mathcal C$, we obviously have
\begin{equation}
\beta(v,t)(x)=d\alpha(\bullet_c)(x)=x\rhd\alpha(\bullet_c)
\end{equation}
for any $x\in L$. Suppose that the map $\beta$ has been defined for any tree up to $n$ vertices. Now if $t$ has $n+1$ vertices and one marked vertex $v$ different from the root, we must have:
\begin{equation}
  \beta(v,t)=\beta\big((v'',t'') \circ (v,t')\big)=\beta(v'',t'') \circ
  \beta(v,t').
\end{equation}
It is easily seen that this does not depend on the choice of the decomposition. Indeed, if $v$ is not the root of $t'$, then  $(v,t')=(v',s')(v,s)$ where $s$ is the subtree with root $v$, and $s'$ is the remaining tree inside $t'$. The vertex $v'$ comes from the vertex immediately below $v$ in $t'$. We have then
$$\beta(v,t)=\beta(v'',t'') \circ \beta(v',s') \circ \beta(v,s)=\beta(v',\widetilde t) \circ \beta(v,s)$$
where $\widetilde t$ is obtained by grafting $s'$ on $t''$ at vertex $v''$. Hence any decomposition boils down to the unique one with minimal upper tree, for which the marked vertex is the root. Now if $t$ has $n+1$ vertices and if the marked vertex is the root, we must define $\beta(v,t)$ as follows:
\begin{equation}
\beta(v,t)=d\alpha(t)-\sum_{v'\neq v}\beta(v',t).
\end{equation}

\medskip
\noindent {\bf Part b.}
The map $\beta$ is an algebra morphism, hence induces a map 
$$\overline\beta: \vtre_{\C}/[\vtre_{\C},\vtre_{\C}]\to \hbox{Hom}_R(L,L)/[\hbox{Hom}_R(L,L),\,\hbox{Hom}_R(L,L)].$$
The map $\tau$ of the bottom line of the diagram being a trace, it induces a map $\overline\tau:\hbox{Hom}_R(L,L)/[\hbox{Hom}_R(L,L),\,\hbox{Hom}_R(L,L)]\to L$. In view of Lemma \ref{univ-trace}, the map $\gamma:=\overline \tau\circ\overline\beta$ then makes Diagram \eqref{eq:univ-PE} commute.

\medskip
\noindent {\bf Part c.}
We prove first ii. Let $t,s$ be trees and first consider $\phi = ds$. Recall
by Proposition \ref{prop:LR-dxyr}a:
\begin{eqnarray*}
 t \rhd ds &= & d(t \rhd s) - ds \circ dt \\
  \beta(t \rhd ds)
  &= & \beta d(t\rhd s) - \beta d(s) \circ \beta d(t) \notag \\
  &= & d \alpha(t \rhd s) - d \alpha(s) \circ d \alpha(t) \notag \\
  &= & d (\alpha(t) \rhd  \alpha(s)) - d\alpha(s) \circ d \alpha(t) \notag
\end{eqnarray*}
Again by Proposition \ref{prop:LR-dxyr}a this equals:
\[ \alpha(t) \rhd d \alpha(s) = \alpha(t) \rhd \beta(ds). \]

Now if part ii holds for $\phi_1$ and $\phi_2$ it is
an immediate computation to verify that
\[ \beta\big(t \rhd (\phi_1 \circ \phi_2)\big) = \alpha(t) \rhd
  \beta(\phi_1 \circ \phi_2), \]
thus showing part ii.

\medskip
Part i is shown in a similar way. First consider $\phi = du$.
Then
\begin{eqnarray*}
  \beta(du)(\alpha(t))  = & d \alpha(u) (\alpha(t)) \\
  = & \alpha(t) \rhd \alpha(u) \\
  = & \alpha (t \rhd u) \\
  = & \alpha( du(t)).
\end{eqnarray*}
Lastly one may show that if i holds for $\phi_1$ and $\phi_2$,
it holds for their composition.
\medskip
For Part iii the aroma $a$ is an image $\tau \phi$ for a marked
tree $\phi$. Hence:
\begin{eqnarray*}
  \gamma(t \rhd a) = & \gamma(t \rhd \tau \phi) \\
\text{(trace is an $L$-homomorphism)}  = & \gamma \tau (t \rhd \phi) \\
  = & \tau \beta(t \rhd \phi) \\
  \text{(use Part ii.)}  = & \tau(\alpha(t) \rhd \beta(\phi)) \\
  \text{(trace is an $L$-homomorphism)} = & \alpha(t) \rhd \tau \beta(\phi) \\
  = & \alpha(t) \rhd \gamma\tau(\phi) \\
  = & \alpha(t) \rhd \gamma(a).
      \end{eqnarray*}
\end{proof}

\subsection{The pre-Lie-Rinehart algebra of aromatic trees}

\begin{definition}
Let $R_{\C}$ be the vector space freely generated by $\C$-colored directed graphs (not necessarily connected) where
each vertex has precisely one outgoing edge.
Such a directed graph is a multiset of aromas, and we call it a multi-aroma.
\end{definition}

The vector space $R_{\C}$ has a commutative unital $\kk$-algebra
structure coming from the monoid structure on multisets of aromas.  Note that $R_{\C}$ is the symmetric algebra $\Sym_{\kk}(A_{\C})$ on the vector space of
$\C$-colored aromas.

\begin{remark} Denote $[n] = \{1,2,\ldots, n\}$. 
In the case of one color, a multi-aroma on $n$ vertices is simply a map $f : [n] \pil [n]$. 
More precisely the multi-aromas identify as orbits of such maps
by the action of the symmetric group $S_n$. 
\end{remark}

\begin{definition}
  Denote $R_\C \te_{\kk} T_\C$ by $L_\C$.
  As a vector space it has as basis all expressions $r \te_{\kk} t$
  where $r$ is a multi-aroma
  and $t$ is a tree. For short we write this as $rt$ and call it
  an \textsl{aromatic tree}, \cite{munthe2016aromatic}.
  See Figure \ref{fig:aroma}.

\end{definition}

\begin{figure}\vspace{-0.2cm}
\centerline{\includegraphics[width=6cm]{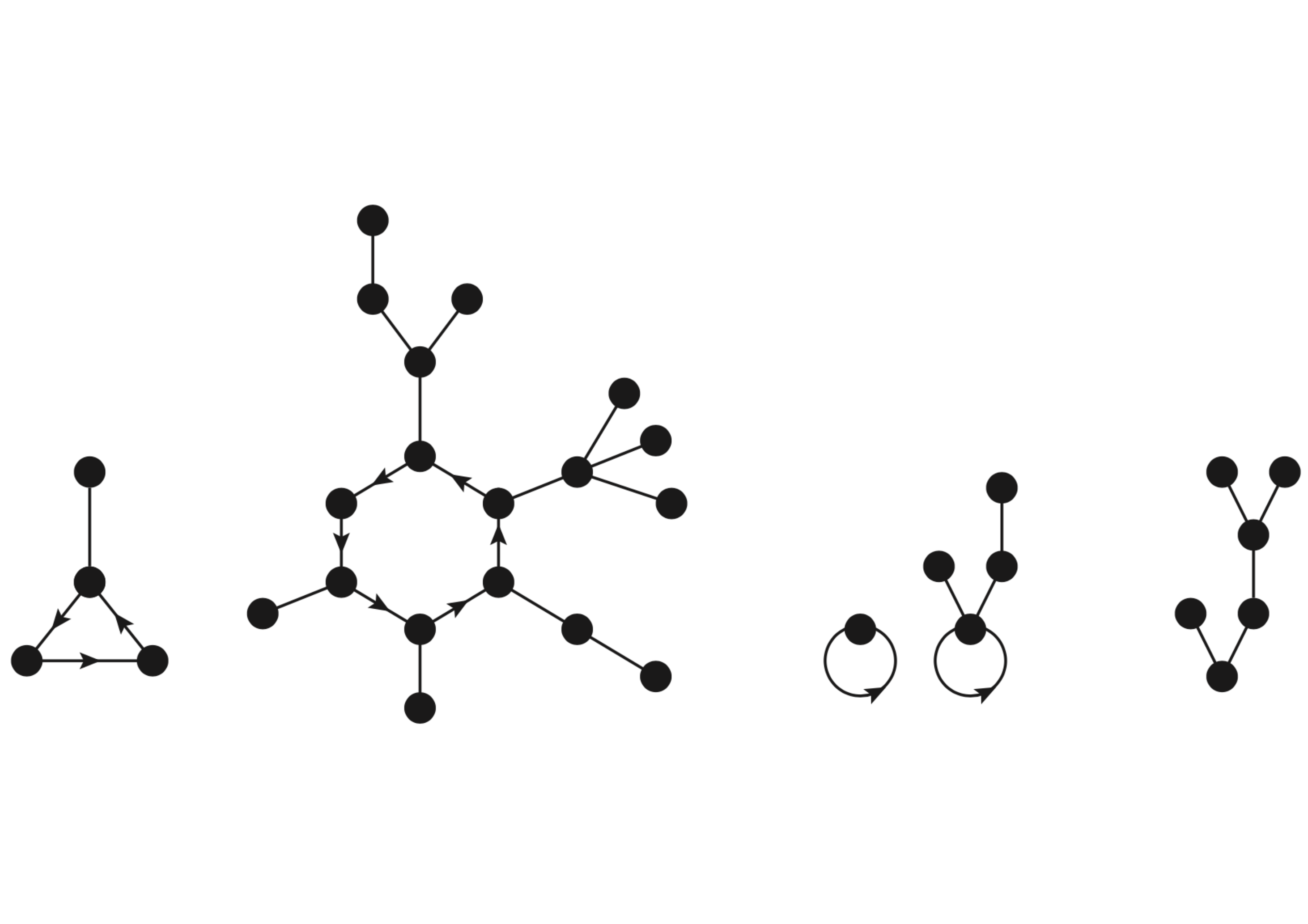}}
\vspace{-1cm}
\caption{\small An example of aromatic tree made of four aromas and a rooted tree}
\label{fig:aroma}
\end{figure}

On $\lri_C$ we have the product
\begin{eqnarray*} 
\lri_{\C}\times \lri_{\C}&\longrightarrow  &\lri_{\C}\\
(qs,rt)&\longmapsto  & \nabla_{qs} rt = qs\rhd rt
\end{eqnarray*}
given by grafting the root of the tree $s$ on any vertex
of the aromatic tree $rt$ and summing up. Similarly we can graft an aromatic 
tree on an aroma. From this we get induced maps:
\begin{align*}
  \nablah : \lri_\C & \pil \Hom_{\kk}(\lri_\C, \lri_\C) &  \quad \rho : \lri_\C & \pil \Der_{\kk}(R_\C,R_\C) &
d : \lri_\C & \pil \Hom_{R_C}(\lri_\C,\lri_\C) \\
rt & \mapsto (qs \mapsto rt \rhd qs) &
rt & \mapsto (q \mapsto rt \rhd q) &
rt & \mapsto (qs \mapsto qs \rhd rt)
\end{align*}
  

\begin{proposition}
  $\lri_{\C}$ is a pre-Lie Rinehart algebra over the commutative algebra $R_{\C}$ spanned by multi-aromas, with anchor map $\rho$ and connection $\nabla$ 
  defined above.
\end{proposition}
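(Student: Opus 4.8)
The plan is to reduce everything to the single combinatorial operation of grafting one tree. For a tree $s\in T_\C$ write $D_s$ for the $\kk$-linear operator on $L_\C$ (and, by restriction, on $R_\C$) that grafts the root of $s$ onto every vertex of its argument and sums. Since $L_\C=R_\C\te_\kk T_\C$ is free over $R_\C$ on the trees, the $R_\C$-module axiom is immediate, and the defining operations all factor through $D_s$: for $X=qs$ with $q\in R_\C$ one has $X\rhd(-)=q\,D_s(-)$, while $\rho(qs)$ is $q$ times the restriction of $D_s$ to $R_\C$. First I would record two properties of $D_s$. (i) $D_s$ is a derivation for the multiplication of multi-aromas, i.e.\ $D_s(fY)=(D_sf)Y+f\,D_sY$ for $f\in R_\C$ and $Y\in L_\C$; this holds because a single graft occupies one vertex and the vertex set of $fY$ is the disjoint union of the vertices of $f$ and of $Y$. (ii) The commutator identity $[D_{s'},D_s]=D_{s'\rhd s}-D_{s\rhd s'}$, equivalently $D_{s'}D_s=D_{s'\rhd s}+A(s',s;-)$, where $A(s',s;-)$ denotes the operator that grafts $s$ and $s'$ independently onto two (possibly coinciding) vertices of the argument.

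Property (ii) is the engine of the whole proof, and it is where I expect the only real work. It is the standard free pre-Lie cancellation adapted to aromatic graphs: when one grafts $s$ and then $s'$, the second root lands either on a vertex already present in the argument --- these terms make up the manifestly symmetric operator $A(s',s;-)=A(s,s';-)$ --- or on a vertex of the freshly grafted copy of $s$, and summing the latter over all vertices of $s$ reassembles exactly $s'\rhd s$ grafted once onto the argument, giving $D_{s'\rhd s}$. The care needed is to check that these two cases genuinely exhaust the new vertices and recombine without overlap; once this is done the rest is bookkeeping.

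With (i) and (ii) in hand I would finish as follows. Property (i) gives the connection/Leibniz rule $\nabla_X(fY)=(\rho(X)f)Y+f\,\nabla_XY$ and, since the left multi-aroma is a spectator factor, the $R_\C$-linearity of $\rho$ and of $\nabla$ in its first argument. Property (ii) gives the left pre-Lie relation on $L_\C$: expanding the associator $X\rhd(Y\rhd Z)-(X\rhd Y)\rhd Z$ for $X=qs$, $Y=q's'$, $Z=W$, and using (i) to move $D_s,D_{s'}$ past the spectator aromas, all spectator-dependent terms cancel and the associator collapses to $qq'\,A(s,s';W)$, which is symmetric under $(q,s)\leftrightarrow(q',s')$; hence the associator is symmetric in $X,Y$. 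This simultaneously makes $[\![X,Y]\!]:=X\rhd Y-Y\rhd X$ a Lie bracket and makes $\nabla$ flat (its curvature is the antisymmetrization of the associator) and torsion-free (by the very definition of the bracket).

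It then remains only to verify that $\rho$ is a homomorphism of Lie algebras into $\Der_\kk(R_\C)$. I would expand both $\rho([\![X,Y]\!])(f)$ and $[\rho(X),\rho(Y)](f)$ directly: using (i) to distribute the grafts over products of multi-aromas and (ii) at the level of $R_\C$ to rewrite $s\rhd(s'\rhd f)-s'\rhd(s\rhd f)$ as $(s\rhd s'-s'\rhd s)\rhd f$, the two expressions match term by term. Finally, the Lie-Rinehart Leibniz rule $[\![X,fY]\!]=(\rho(X)f)Y+f[\![X,Y]\!]$ follows from the connection rule together with the first-slot $R_\C$-linearity of $\nabla$, completing the verification of all the pre-Lie-Rinehart axioms.
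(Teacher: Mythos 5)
Your proof is correct, and in fact it supplies considerably more than the paper does: the paper's entire proof of this proposition reads ``Checking the Leibniz rule for the anchor map and the left pre-Lie relation for $\rhd$ is an easy exercise left to the reader,'' so there is no argument to compare yours against. Your organization of that exercise is clean and complete: reducing all the axioms to the single grafting operator $D_s$, with (i) the derivation property coming from the disjointness of vertex sets and (ii) the identity $D_{s'}D_s = D_{s'\rhd s} + A(s',s;-)$ with $A$ symmetric, is exactly the standard free-pre-Lie cancellation, and your bookkeeping showing that the associator collapses to $qq'\,A(s,s';W)$ (whence symmetry, hence the pre-Lie relation, flatness, and via antisymmetrization the Jacobi identity), together with the term-by-term match verifying that $\rho$ is a Lie algebra homomorphism, checks out in every detail.
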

\begin{proof}
Checking the Leibniz rule for the anchor map and the left pre-Lie relation for $\rhd$ is an easy exercise left to the reader.
\end{proof}

\subsection{The algebra of marked aromatic trees}

\begin{definition} Let $\hat{E}_\C$ be the free $R_\C$-module spanned
  by all pairs $(v,rt)$ where $rt$ is an aromatic tree and $v$ is a vertex
  of $rt$. It identifies naturally as an $R_\C$-submodule of 
  $\End_{R_\C}(L_\C)$ by
  \[ (v,rt) \mapsto (u \mapsto u \rhd_v rt). \]
  In fact by composition $\circ$ it is an $R_\C$-submodule subalgebra.
\end{definition}

Extending the map $\tau$ of \eqref{eq:trmap} we get an $R_\C$-linear map
\[ \tau : \hat{E}_\C \pil R_\C. \]
It maps an element $(v,rt)$ to the multi-aroma we get by joining the root
of $t$ to the vertex $v$. 

\begin{lemma} \label{lem:aroma-Eh}$\hbox{}$
  
  \begin{itemize}
  \item[a.] $\hat{E}_\C$ is an $L_\C$-submodule of $\End_{R_\C}(L_\C)$,
  \item[b.] $\tau$ is an $L_\C$-module map,
  \item[c.] $\tau$ is a trace map, i.e. it vanishes on commutators.
\end{itemize}
  \end{lemma}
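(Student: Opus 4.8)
The plan is to prove all three parts by reducing to the generators $(v,rt)$ of the free $R_\C$-module $\hat{E}_\C$ and then arguing combinatorially with grafting and with the closing-up operation that defines $\tau$. Two reductions are used throughout. First, $\nablah$ is $R_\C$-linear in its subscript: this follows from \eqref{endo-connection} together with the $R_\C$-linearity of the endomorphisms in $\hat{E}_\C\subset\End_{R_\C}(L_\C)$, and gives $\nablah_{qs}=q\,\nablah_s$ for $q\in R_\C$ and a pure tree $s$. Second, $\nablah$ is a connection on $\End_{R_\C}(L_\C)$, so $\nablah_s(f\phi)=(\rho(s).f)\phi+f\,\nablah_s\phi$. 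Since $L_\C$ is generated over $R_\C$ by pure trees and $\hat{E}_\C$ is spanned over $R_\C$ by the pairs $(v,rt)$, every verification reduces to computing $\nablah_s(v,rt)$ for a pure tree $s$.

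For part a I would establish the clean formula $\nablah_s(v,rt)=(v,\,s\rhd rt)$, where $s\rhd rt$ denotes the sum over all vertices $x$ of $rt$ of the aromatic tree obtained by grafting $s$ at $x$, the marked vertex remaining $v$. As both sides lie in $\End_{R_\C}(L_\C)$ it suffices to evaluate on a pure tree $w$, where $(\nablah_s(v,rt))(w)=s\rhd\big((v,rt)(w)\big)-(v,rt)(s\rhd w)$ with $(v,rt)(w)=w\rhd_v rt$. The vertex set of $w\rhd_v rt$ is partitioned into the vertices of $rt$ and the vertices of $w$; grafting $s$ at a vertex coming from $w$ produces exactly the terms of $(v,rt)(s\rhd w)$, which therefore cancel, while grafting $s$ at a vertex of $rt$ produces exactly $(v,\,s\rhd rt)(w)$. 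This is the pre-Lie computation underlying Proposition \ref{prop:LR-dxyr}a performed vertex by vertex, and it exhibits $\nablah_s(v,rt)$ as an element of $\hat{E}_\C$.

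Part b is then immediate. By $R_\C$-linearity of $\tau$ and of the anchor it is enough to check $\tau(\nablah_s(v,rt))=\rho(s).\tau(v,rt)$. The right-hand side grafts $s$ at every vertex of the multi-aroma $\tau(v,rt)$, whose vertex set coincides with that of $rt$; the left-hand side, using the formula from part a, closes up each term of $(v,\,s\rhd rt)$. Both are thus the sum, over vertices $x$ of $rt$, of the multi-aroma obtained by simultaneously grafting $s$ at $x$ and adjoining the closing edge from the root of $t$ to $v$. These two local modifications act at independent slots and hence commute, giving the desired equality.

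The real work is part c, where I would follow the cyclic-invariance strategy of Lemma \ref{univ-trace}. Reducing by $R_\C$-bilinearity to generators $\phi=(v,rt)$ and $\psi=(u,pw)$, I would first describe the composite as a marked aromatic tree: $\phi\circ\psi=(u,G)$, where $G$ is the disjoint union of $rt$ and $pw$ with the single extra edge joining the root of $w$ to the vertex $v$, the marked vertex being $u$ and the tree-part root being that of $t$. Applying $\tau$ then adjoins the second edge, from the root of $t$ to $u$. Hence $\tau(\phi\circ\psi)$ is the disjoint union of $rt$ and $pw$ with exactly the two edges $\mathrm{root}(w)\to v$ and $\mathrm{root}(t)\to u$ added, each supplying the missing outgoing edge of a tree root, so that the result is a genuine multi-aroma. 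Computing $\tau(\psi\circ\phi)$ in the same way merely interchanges which edge arises from composition and which from the trace, yet yields the identical pair of added edges; therefore $\tau(\phi\circ\psi)=\tau(\psi\circ\phi)$. The main obstacle here is purely the bookkeeping of the composite, in particular checking that this description is uniform whether the marks $v,u$ lie in the tree parts or inside aromas, but the symmetric two-edge picture makes the trace property transparent and sidesteps the explicit root-to-mark path decomposition needed in Lemma \ref{univ-trace}.
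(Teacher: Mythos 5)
Your proposal is correct and follows essentially the same route as the paper: part (a) is the formula $\nablah_s(v,rt)=(v,s\rhd rt)$ (which the paper states directly, while you verify the cancellation by evaluating on a pure tree), and parts (b) and (c) are exactly the paper's arguments that the grafting and the closing-up edge(s) are independent local modifications that can be performed in either order. The only cosmetic difference is that you reduce to pure-tree subscripts via $R_\C$-linearity of $\nablah$ before computing, a step the paper leaves implicit.
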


\begin{proof}
  a. For an aromatic tree $rt$ and a  marked aromatic tree
  $\alpha=(v,qs)$, one obtains $rt\rhd \alpha$ by grafting the root of $t$ on any vertex of $qs$ and by summing up all possibilities, keeping of course the marked vertex $v$ in each term.

  b. Consider the operations:
  \begin{itemize}
  \item First graft $t$ on each vertex of $qs$ and
    then attach the root of $s$ to $v$. This gives
    $\tau (rt \rhd (v,qs))$.
  \item First attach the root of $s$ on $v$ and then graft
    the root of $t$ to any vertex of the resulting aroma. This
    gives $rt \rhd \tau (v,qs)$.
  \end{itemize}
  We see these operations give the same result, and so
  \[ \tau (rt \rhd (v,qs)) = rt \rhd \tau (v,qs). \]

  c. Look at $\tau ((w,rt) \circ (v,qs))$. We get this by:
  \begin{itemize}
  \item[1.] Composition $\circ$: Graft the root of $s$ on $w$.
  \item[2.] Map $\tau$: Graft the root of $t$ on $v$.
  \end{itemize}
  But the order of these two operations can be switched without
  changing the result. Hence $\tau$ is a trace map.
  \end{proof}

\subsection{The algebra of elementary endomorphisms}

\ignore{
\begin{proposition}
The complexes \eqref{eq:tree-aromad} and \eqref{eq:tree-aromatau} are exact.
\end{proposition}
}

Let $\varo_\C$ be the vector space freely generated by pairs $(v,a)$ where
$a$ is an aroma in $\aro_\C$ and $v$ a vertex of $a$.
There is an injection
\[ \varo_\C \pil \Hom_{\kk}(\tre_C, \aro_\C), \quad
  (v,a) \mapsto (s \mapsto s \rhd_v a), \]
where the tree $s$ is grafted on the vertex $v$ in $a$.
The $T_\C$-module structure on $A_\C$ gives by adjunction an injective
linear map
\[ \di :  \aro_\C \pil \Hom_{\kk}(\tre_\C, \aro_C), \quad a \mapsto (t \mapsto t \rhd a).\]
Its image lies in the image of $\varo_\C$. Thus $\aro_\C$ may be considered
a subspace of $\varo_\C$, identifying
$\di a = \sum_{v \in a} (v,a)$. Let $\arot_\C$ be the $\kk$-vector space
generated by expressions $da \cdot t$, where $a$ is an aroma and $t$ a tree.
It identifies as $dA_\C \te_{\kk} T_\C$. 

We have compositions:
\begin{align} \notag
  \vtre_\C &\circ \vtre_\C \pil \vtre_\C &:&& (v,s) \circ (w,t)=&
(w, t \rhd_w s) 
  \\  \notag
  \arot_\C &\circ \vtre_\C &:&& \, (da \cdot t) \circ (v,s)    =&
\sum_{u \in a} (v,s \rhd_u a) \cdot t \\
 \label{eq:aroma-DV} &&&&&\\
  \notag
  \vtre_\C &\circ \arot_\C \pil \arot_\C   & :&&\, (v,s) \circ (da \cdot t)
\mapsto
                         & da \cdot (t \rhd_v s) \\\notag
  \arot_\C &\circ \arot_\C \pil R_\C \te_{\kk} \arot_\C & : &&\,
                          (da \cdot t) \circ (db \cdot s)
                          \mapsto &(s \rhd a) \cdot  db \cdot t
\end{align}


\begin{proposition} \label{prop:aroma-EC}
  The $R_\C$-submodule $E_\C$ of $\hat{E}_\C$
  generated by $\arot_\C, \vtre_\C$ and
  their composition $\arot_\C \circ \vtre_\C$, is the algebra
  of elementary endomorphisms $\emo_{R_\C}(L_\C)$.

  It decomposes as a free $R_\C$-module:
  \begin{equation} \label{eq:aroma-Efree}
    E_\C = \big(R_\C \te_{\kk} V_\C\big) \bigoplus \big(R_\C \te_{\kk} D_\C\big)
    \bigoplus \big(R_\C \te_\kk (D_\C \circ V_\C)\big).
  \end{equation}
    
\end{proposition}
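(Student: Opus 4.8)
The plan is to prove the two assertions in turn: that $E_\C$ coincides with $\emo_{R_\C}(L_\C)$, and that the displayed sum is direct with free summands. First I would record the explicit form of the generators $d(rt)$. Writing the multi-aroma as $r=a_1\cdots a_k$ and grafting the tree either on the tree part $t$ or on one of the aromas $a_i$, one finds
\[ d(rt)=r\cdot dt+\sum_{i=1}^{k}(a_1\cdots\widehat{a_i}\cdots a_k)\,(da_i\cdot t), \]
where $dt=\sum_{v\in t}(v,t)$ and $\widehat{a_i}$ denotes omission. In particular every $d(rt)$ lies in $(R_\C\te_\kk V_\C)\oplus(R_\C\te_\kk D_\C)$. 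Taking $r=1$ gives $dt=\sum_v(v,t)$, and the induction of Lemma~\ref{lem:EC} — which only manipulates pure trees and their graftings, hence applies verbatim in $L_\C$ — then places every marked tree $(v,t)$ in $\emo_{R_\C}(L_\C)$, so $V_\C\sus\emo_{R_\C}(L_\C)$. Taking $r=a$ a single aroma gives $da\cdot t=d(at)-a\cdot dt$, whence $D_\C\sus\emo_{R_\C}(L_\C)$, and therefore also $D_\C\circ V_\C\sus\emo_{R_\C}(L_\C)$. This yields $E_\C\sus\emo_{R_\C}(L_\C)$.

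For the reverse inclusion I would verify that $E_\C$ is closed under composition. By associativity this reduces to the four basic rules of \eqref{eq:aroma-DV}: $V_\C\circ V_\C\sus V_\C$, $V_\C\circ D_\C\sus D_\C$, $D_\C\circ D_\C\sus R_\C\te D_\C$, together with the defining product $D_\C\circ V_\C$. For instance $D_\C\circ(D_\C\circ V_\C)=(D_\C\circ D_\C)\circ V_\C\sus(R_\C\te D_\C)\circ V_\C\sus R_\C\te(D_\C\circ V_\C)$, and the remaining mixed products are handled identically; this is routine bookkeeping. Since $\emo_{R_\C}(L_\C)$ is generated as an $R_\C$-module algebra by the $d(rt)$ (Proposition~\ref{E-PL}), all of which lie in $E_\C$ by the formula above, closure gives $\emo_{R_\C}(L_\C)\sus E_\C$, hence equality.

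It remains to prove the direct sum decomposition. Freeness of each summand is automatic: over the field $\kk$ each of $V_\C$, $D_\C$, $D_\C\circ V_\C$ is a vector space, and as $R_\C=\Sym_\kk(A_\C)$ is free over $\kk$, each $R_\C\te_\kk(-)$ is a free $R_\C$-module; so the content is directness inside $\hat E_\C$. Grading $\hat E_\C$ by the position of the marked vertex, a generator $(v,rt)$ with $v$ on the tree part contributes to $R_\C\te_\kk V_\C$, while $v$ inside an aroma contributes to $R_\C\te_\kk(\varo_\C\te_\kk T_\C)$, where $\varo_\C$ is the space of marked aromas; these families are disjoint, so the work is to separate $R_\C\te D_\C$ from $R_\C\te(D_\C\circ V_\C)$ inside $R_\C\te(\varo_\C\te_\kk T_\C)$. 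The decisive observation is that grafting a tree onto an aroma never changes its central cycle: in every generator $(da\cdot t)\circ(v,s)=\sum_{u\in a}(v,s\rhd_u a)\cdot t$ the marked vertex lies in the grafted tree $s$, hence strictly off the cycle, so $D_\C\circ V_\C\sus\varo_\C^{\,\mathrm{off}}\te_\kk T_\C$, the span of marked aromas marked off the cycle. Conversely the projection $P^{\mathrm{cyc}}$ retaining only on-cycle marks sends $da\cdot t=\sum_{v\in a}(v,a)\cdot t$ to $\sum_{v\in\mathrm{cyc}(a)}(v,a)\cdot t$ and is injective on $D_\C$, since the underlying pair $(a,t)$ is recovered from any single term. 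As $\te_\kk$ is exact over $\kk$, the map $\id_{R_\C}\te P^{\mathrm{cyc}}$ stays injective on $R_\C\te D_\C$ and annihilates $R_\C\te(D_\C\circ V_\C)$; hence these subspaces meet only in $0$, and the sum is direct.

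The hard part is this last separation. Both $D_\C$ and $D_\C\circ V_\C$ live among marked-aroma-times-tree elements, and telling them apart requires the structural fact that the marks produced by $D_\C\circ V_\C$ always sit off the central cycle whereas $D_\C$ retains a nonzero on-cycle component. The earlier steps — the Jacobian-type identity for $d(rt)$ and the closure of $E_\C$ — are comparatively mechanical, resting only on the left pre-Lie relation through Proposition~\ref{prop:LR-dxyr} and the composition table \eqref{eq:aroma-DV}.
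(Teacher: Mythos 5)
Your proof is correct and follows essentially the same route as the paper's: the identity $d(rt)=dr\cdot t+r\cdot dt$ together with $da\cdot t=d(at)-a\,dt$ and Lemma \ref{lem:EC} for the two inclusions, the composition table \eqref{eq:aroma-DV} for closure, and the location of the marked vertex (on the tree part, on the central cycle, or off the cycle) for directness. Your explicit on-cycle projection $P^{\mathrm{cyc}}$ is just a fleshed-out version of the paper's one-line observation that every element of $R_\C\te_\kk D_\C$ has a term marked on the interior cycle while $D_\C\circ V_\C$ has none.
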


\begin{proof}
  By the relations \eqref{eq:aroma-DV} above, we see that $E_\C$ is an
  $R_\C$-module subalgebra of $\hat{E}_\C$.

  \medskip
\noindent {\it Inclusion $\emo_{R_\C}(L_\C) \sus E_\C$:}
  Considering the map $d$:
  \[ d(rt) = dr \cdot t + r \cdot dt. \]
Looking at the right side of this, 
the first term is in $\arot_\C$, and the second term
in $R_\C \te_{\kk} \vtre_\C$.

\medskip
\noindent {\it Inclusion $E_\C \sus \emo_{R_\C}(L_\C)$:}
  By Lemma \ref{lem:EC}, $\vtre_\C \sus \emo_{R_\C}(L_\C)$.
  Since
  \[ d(at) - a dt = da \cdot t, \]
  also $\arot_\C \sus \emo_{R_\C}(L_\C)$.

  \medskip
  \noindent {\em Free decomposition:}
  An element of $R_\C \te_\kk V_\C$ has its marks on trees, and
  so cannot be an $R$-linear
  combination of the two other parts. Any element of $R_\C \te_\kk D_\C$ must
  have a term with a mark on the interior cycle of an aroma and so cannot
  be a sum of terms in $D_\C \circ V_\C$.
\end{proof}

By Lemma \ref{lem:aroma-Eh} and Proposition \ref{prop:aroma-EC} above we have:  
\begin{corollary}\label{lemma-trace}
The pre-Lie-Rinehart algebra $\lri_{\C}$ of aromatic trees is tracial.
\end{corollary}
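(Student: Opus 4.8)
The plan is to verify that the pre-Lie-Rinehart algebra $\lri_\C$ of aromatic trees satisfies the two defining conditions of a tracial Lie-Rinehart algebra (in the sense of Definition~\ref{diff}), using the structural results already established. We must produce a connection $\nabla$ on $\lri_\C$ and an $R_\C$-linear trace map $\tau : \emo_{R_\C}(\lri_\C) \to R_\C$ satisfying the trace property (vanishing on commutators) and the $L$-module homomorphism property \eqref{trace-twotwo}, namely $\tau(\nabla_X \alpha) = \rho(X).\tau(\alpha)$ for all $X \in \lri_\C$ and $\alpha \in \emo_{R_\C}(\lri_\C)$.

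First I would recall that $\lri_\C$ is already known to be a pre-Lie-Rinehart algebra with its grafting connection $\nabla$ and anchor $\rho$, so a flat torsion-free connection is in hand. The candidate trace map is the extension $\tau : \hat{E}_\C \to R_\C$ of \eqref{eq:trmap}, restricted to the subalgebra $E_\C = \emo_{R_\C}(\lri_\C)$; the identification $E_\C = \emo_{R_\C}(\lri_\C)$ is exactly the content of Proposition~\ref{prop:aroma-EC}. Since $\emo_{R_\C}(\lri_\C)$ is an $R_\C$-module subalgebra of $\hat{E}_\C$, any property of $\tau$ on $\hat{E}_\C$ descends to the restriction. Thus the entire verification reduces to checking the two trace axioms for the map $\tau : \hat{E}_\C \to R_\C$.

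The key observation is that both axioms have already been proved in Lemma~\ref{lem:aroma-Eh}. Part c of that lemma states precisely that $\tau$ vanishes on commutators, which is the trace property. Part b states that $\tau$ is an $L_\C$-module map, which is exactly the invariance condition \eqref{trace-twotwo}: unwinding the $L_\C$-module structure, $\tau(X \rhd \alpha) = X \rhd \tau(\alpha) = \rho(X).\tau(\alpha)$ for $X \in \lri_\C$ and $\alpha \in \hat{E}_\C$. The $R_\C$-linearity of $\tau$ is built into its definition as an $R_\C$-linear map on the free $R_\C$-module $\hat{E}_\C$. So I would simply cite Lemma~\ref{lem:aroma-Eh}(b,c) together with Proposition~\ref{prop:aroma-EC} and conclude.

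I do not expect a genuine obstacle here, since all the combinatorial work has been front-loaded into the preceding lemmas; the corollary is essentially an assembly step. The only point that requires a moment's care is confirming that restricting $\tau$ from $\hat{E}_\C$ to the subalgebra $E_\C = \emo_{R_\C}(\lri_\C)$ preserves both axioms—but this is immediate, because the trace property and the module-map property are inherited by any $R_\C$-submodule subalgebra closed under the $L_\C$-action, and $E_\C$ is such a subalgebra by Proposition~\ref{prop:aroma-EC} and the remark following Definition~\ref{diff}. Hence the proof is the one-line deduction already indicated in the excerpt, namely that the statement follows immediately from Lemma~\ref{lem:aroma-Eh} and Proposition~\ref{prop:aroma-EC}.
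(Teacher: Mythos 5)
Your proof is correct and matches the paper's approach exactly: the paper deduces the corollary immediately from Lemma \ref{lem:aroma-Eh} (parts b and c give the $L_\C$-module property and the trace property of $\tau$) together with Proposition \ref{prop:aroma-EC} (which identifies $E_\C$ with $\emo_{R_\C}(\lri_\C)$). Your additional remark that the axioms restrict from $\hat{E}_\C$ to the submodule subalgebra $E_\C$ is the same implicit step the paper relies on.
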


\section{The universal tracial pre-Lie-Rinehart algebra}

We show that the pair $(L_\C, R_\C)$ is
a {\it universal tracial} pre-Lie Rinehart algebra.
\begin{remark}
  Originally we aimed to show that the pair $(\lri_\C, \rri_\C)$
  was a universal pre-Lie-Rinehart algebra. However from a given
  map of sets
  \[ \C \pil L \]
  we could not extend this to maps
 \[ \lri_\C \pil  L, \quad  \rri_\C \pil R. \]  
 The problem is that one cannot generate all of $\lri_\C$ or $\rri_\C$
 by starting
from $\C$ and using the operations $\operatorname{Div} = \tau \circ d$
and $\rhd$ applied on the algebra $L_\C$, either
between aromatic trees $s \rhd t$ or on an aroma $s \rhd a$. In particular
one cannot generate all of the multi-aromas $\rri_\C$.

To remedy this we have introduced the subalgebra $\vtre_\C$ of
$\eend_{\kk}(\tre_\C)$
{\it generated} by the image of $\tre_\C \pil \eend_{\kk}(\tre_\C)$
together with its trace map $\tau$.
From this subalgebra one can get all aromas by applying the trace map.
Furthermore $\vtre_\C$ is ``fattened up'' to the subalgebra
$\emo_\C$ of $\eend_{\rri_\C}(\lri_\C)$ over $\rri_\C$.
To get the universality property we have therefore introduced the class of
tracial pre-Lie-Rinehart algebras.
\end{remark}

The map $\gamma$ of Theorem \ref{thm:TEA} extends to $\hat{\gamma}: R_\C \to R$
given by
\begin{equation*}
\hat{\gamma}(a_1\cdots a_p):=\gamma(a_1)\cdots\gamma(a_p).
\end{equation*}
The map $\alpha$ of Theorem \ref{thm:TEA} extends to
$\hat{\alpha}:\lri_{\C}\to L$ given by
\begin{equation*}
  \hat{\alpha}(a_1\cdots a_it):= \hat{\gamma}(a_1 \cdots a_p) \alpha(t) = 
  \gamma(a_1)\cdots\gamma(a_i)\alpha(t)
\end{equation*}
for any $a_1,\ldots, a_i\in \aro_{\C}$ and $t\in \tre_{\C}$,.

\begin{theorem}[{\bf Universality property}]  \label{thm:UniE}
  Let $(L,R)$ be a tracial pre-Lie-Rinehart algebra,
  and $\C \to L$ a map of sets.
  \begin{itemize}
    \item[a.] This extends to a unique homomorphism
  of tracial pre-Lie-Rinehart algebras:
  \[ (\hat{\alpha},\hat{\gamma}) : (L_\C, R_\C) \pil (L,R). \]
\item[b.] The map $\beta$ of Theorem \ref{thm:TEA} extends
  to a homomorphism $\hat{\beta}$ of associative algebras
  giving a commutative diagram
 \begin{equation}\label{eq:univ-PEA} \begin{CD}
\lri_{\C} @>{d}>> \emo_{\C} @>{\tau}>> \rri_{\C}\\
@V{\hat{\alpha}}VV  @V{\hat{\beta}}VV @V{\hat{\gamma}}VV \\
L   @>{d}>> \emo_R(L) @>{\tau}>> R.
   \end{CD}
 \end{equation}
 \item[c.]
 It fulfills the following for $u \in L_\C$ and
 $\phi \in E_\C$:
 \begin{itemize}
\item[i.] $\hat{\beta}(\phi)(\hat{\alpha}(u)) = \hat{\alpha}(\phi(u))$, 
\item[ii.] $\hat{\beta}(u \rhd \phi) = \hat{\alpha}(u) \rhd \hat{\beta}(\phi).$
\end{itemize}
\end{itemize}
\end{theorem}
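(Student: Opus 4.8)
The plan is to build $(\hat\alpha,\hat\gamma)$ and $\hat\beta$ on the already-established foundation of Theorem \ref{thm:TEA}, then verify each of the required properties by reducing aromatic data to tree data. I would proceed as follows.

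\textbf{Part a.} The maps $\hat\alpha$ and $\hat\gamma$ are already defined (just before the statement) via the multiplicative extension of $\gamma$ and $\alpha$. First I would check that $(\hat\alpha,\hat\gamma)$ is a homomorphism of pre-Lie-Rinehart algebras: that $\hat\gamma$ is a $\kk$-algebra homomorphism is immediate since $R_\C=\Sym_\kk(A_\C)$ is free as a commutative algebra on $A_\C$, so the multiplicative extension is automatic and well-defined; that $\hat\alpha$ is $R_\C$-linear over $\hat\gamma$ and respects $\rhd$ follows from property (iii) of Theorem \ref{thm:TEA}, $\gamma(t\rhd a)=\alpha(t)\rhd\gamma(a)$, together with the Leibniz rule for $\rho$. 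The anchor compatibility $\hat\gamma(\rho_{L_\C}(u).f)=\rho_L(\hat\alpha(u)).\hat\gamma(f)$ reduces, by multiplicativity and the module structure, to the same property (iii). Uniqueness is forced because any homomorphism must agree with $\alpha$ on $\bullet_c$, hence on all of $T_\C$ by the free pre-Lie property, and then on $A_\C$ (hence on $R_\C$) because every aroma is $\tau$ of a marked tree and $\gamma=\overline\tau\circ\overline\beta$ is thereby determined.

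\textbf{Part b.} Here I would define $\hat\beta$ on the free decomposition \eqref{eq:aroma-Efree}: on $R_\C\te_\kk V_\C$ set $\hat\beta(r\cdot\phi)=\hat\gamma(r)\cdot\beta(\phi)$ using the $\beta$ of Theorem \ref{thm:TEA}; on the $D_\C$ and $D_\C\circ V_\C$ summands, express a generator $da\cdot t$ via the relation $d(at)-a\,dt=da\cdot t$ established in the proof of Proposition \ref{prop:aroma-EC}, forcing $\hat\beta(da\cdot t)=d\hat\alpha(at)-\hat\gamma(a)\,d\hat\alpha(t)$, and similarly for compositions using the product formulas \eqref{eq:aroma-DV}. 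I would then verify $\hat\beta$ is an associative-algebra homomorphism by checking it respects each of the four composition rules in \eqref{eq:aroma-DV}, translating each grafting identity into the corresponding identity in $\emo_R(L)$ via Proposition \ref{prop:LR-dxyr}. Commutativity of the left square of \eqref{eq:univ-PEA} is the identity $\hat\beta\circ d=d\circ\hat\alpha$, which holds on $R_\C\te T_\C$ because $d(rt)=dr\cdot t+r\,dt$ and $\hat\beta$ was defined precisely to match this. Commutativity of the right square, $\hat\gamma\circ\tau=\tau\circ\hat\beta$, reduces on each summand to the trace compatibility already encoded in $\gamma=\overline\tau\circ\overline\beta$ from Theorem \ref{thm:TEA}b.

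\textbf{Part c.} Properties (i) and (ii) are the aromatic analogues of Theorem \ref{thm:TEA}c.i and c.ii. I would prove them by first establishing them for the generating endomorphisms $\phi\in V_\C$ and $\phi=da\cdot t\in D_\C$ and their composites, then extending by $R_\C$-linearity. For $\phi\in V_\C$ the statements follow directly from Theorem \ref{thm:TEA}c, since $\hat\alpha$ restricts to $\alpha$ on trees and the multi-aroma prefactor passes through by $R_\C$-linearity and the definition of $\hat\beta$. For $\phi=da\cdot t$ I would reduce to trees using the relation $da\cdot t=d(at)-a\,dt$, reducing everything to statements about $d$ applied to aromatic trees, where Proposition \ref{prop:LR-dxyr} and the already-proven properties for $V_\C$ apply. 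Finally the compositional closure: if (i) (resp. (ii)) holds for $\phi_1,\phi_2$ then it holds for $\phi_1\circ\phi_2$, by an immediate computation as in Theorem \ref{thm:TEA}c.

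\textbf{Main obstacle.} The delicate point I expect is the \emph{well-definedness} of $\hat\beta$ and of the induced $\psi$ in the sense of Definition \ref{def:LR-TF}: because an elementary endomorphism of $L_\C$ may have several expressions as an $R_\C$-linear combination of compositions of $d$'s, one must confirm that $\hat\beta$ does not depend on the chosen expression. The free decomposition \eqref{eq:aroma-Efree} of Proposition \ref{prop:aroma-EC} is what rescues this, since it gives canonical normal forms; the real work is checking that the defining formulas for $\hat\beta$ on generators are consistent with all four composition rules \eqref{eq:aroma-DV} simultaneously, and that the resulting trace identity $\hat\gamma(\tau\phi)=\tau\psi$ of Definition \ref{def:LR-TF} holds independently of the representation, which is exactly the content of the right-hand square commuting together with the trace property of $\tau$.
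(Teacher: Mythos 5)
Your proposal follows essentially the same route as the paper: the same multiplicative extensions $\hat\alpha,\hat\gamma$, reduction of the anchor and pre-Lie compatibilities to Theorem \ref{thm:TEA}c, the same definition of $\hat\beta$ on the free decomposition \eqref{eq:aroma-Efree} (your $\hat\beta(da\cdot t)=d\hat\alpha(at)-\hat\gamma(a)\,d\hat\alpha(t)$ equals the paper's $d\gamma(a)\cdot\alpha(t)$), the same verification against the four composition rules \eqref{eq:aroma-DV}, and the same reduction of Part c to generators via Proposition \ref{prop:LR-dxyr}. The one step the paper spells out that you leave implicit is that $D_\C\te_\kk V_\C\to D_\C\circ V_\C$ is a bijection (proved by locating a marked point at minimal distance from the interior cycle and peeling off a unique factor $(da\cdot t)\circ(v,s)$); this is the actual content behind your appeal to ``canonical normal forms,'' since Proposition \ref{prop:aroma-EC} by itself only gives the three-summand decomposition, not uniqueness of the composition factorization inside the third summand.
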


\begin{proof} {\bf Part a.} We show:
  \begin{itemize}
  \item[ai.]  $\hat{\gamma}$ is a $k$-algebra homomorphism,
    \item[aii.] For an aromatic tree $rt$ and a multiaroma $q$:
      $\hat{\gamma}(rt \rhd q) = \hat{\alpha}(rt) \rhd \hat{\gamma}(q)$.
      Note: It is to establish this property that we require the trace
      map $\tau$ to be an $L$-module homomorphism.
\item[aiii.] $\hat{\alpha}$ is a homomorphism of pre-Lie algebras,
\item[aiv.] For a multiaroma $q$ and an aromatic tree $rt$:
$\hat{\alpha}(q \cdot rt) = \hat{\gamma}(q) \cdot \hat{\alpha}(rt)$.
  \item[av.] Uniqueness of $\hat{\alpha}$ and $\hat{\gamma}$.
\end{itemize}
Property ai is by construction since $R_\C$ is a free commutative algebra.
Property aiv is by definition of $\hat{\alpha}$.

Since the action of $\rhd$ of $L_\C$ on $R_\C$ is a derivation (the
anchor map), it is enough for Property aii to show for an aroma $a$ and
tree $t$ that:
\[ \gamma (t \rhd a) = \alpha(t) \rhd \gamma(a) \]
and this is done in Theorem \ref{thm:TEA}.

For Property aiii we have for multiaromas $r,q$ and trees $t,s$ that
  \begin{eqnarray*} \hat{\alpha}(rt \rhd qs) & = & 
\hat{\alpha} \big(r (t\rhd q) s + rq (t \rhd s)\big), \\
& = & \hat{\gamma}\big(r (t\rhd q)\big) \alpha(s) +
      \hat{\gamma}(rq) \alpha(t \rhd s)\\
& = & \hat{\gamma}(r)({\alpha}(t) \rhd \hat{\gamma}(q)) \alpha(s) +
\hat{\gamma}(r)\hat{\gamma}(q) (\alpha(t) \rhd \alpha(s)).        
  \end{eqnarray*}
This again equals:
\[ \hat{\alpha}(rt) \rhd \hat{\alpha}(qs) =
\hat{\gamma}(r) \alpha(t) \rhd  \hat{\gamma}(q) \alpha(s). \]

The uniqueness, Property av, of $\hat{\alpha}$ is by $L_\C$ being the
free pre-Lie algebra. As for $\hat{\gamma}$ it is determined by its
restriction $A_\C \pil R$. By the requirement of Definition \ref{def:LR-TF}
and the uniqueness
of $\gamma$ for making a commutative diagram in Theorem \ref{thm:TEA}
we see that the $\hat{\gamma}$ restricted to $A_\C$ must equal $\gamma$.

  \medskip
  \noindent{\bf Part b.}
  \noindent Definition of  $\hbeta$: $E_\C$ decomposes as a free
  $R_\C$-module \eqref{eq:aroma-Efree}. We let
  $\hbeta(r \phi) = \hat{\gamma}(r) \hbeta{\phi}$ when $\phi$ is a basis
  element for these free modules.
  On $V_\C$ we let $\hbeta$ be given by $\beta$. On $D_\C$ we define
  \[ \hbeta(da \cdot t) = d\alpha(t) \cdot \alpha(t). \]
  Lastly consider the map
  \[ D_\C \te_{\kk} V_\C \pil D_\C \circ V_\C, \]
where by the latter we mean the vector space spanned by all compositions.
  This map is a bijection. To see this, consider \eqref{eq:aroma-DV}.
  Note that an element $\omega$ in $D_\C \circ V_\C$ has no marked point on the
  interior cycle of the aroma. Let then $v$ be a marked point in
  a term of the element $\omega$
  which has minimal distance from $v$ to the interior cycle. Following the path
  from $v$ to the interior cycle, the vertex attached to the interior cycle
  (but not {\it on} the cycle), must
  be the root of a tree $s$ with $v \in s$,
  which is grafted onto an aroma $a$. Thus we have
  reconstructed $a$, $(v,s)$ and $t$ and can subtract the image
  of a multiple of $(da \cdot t) \circ (v,s)$ from $\omega$. In this way
  we may continue and get $\omega$ as the image of a unique element
  in $D_\C \te_{\kk} V_\C$.

  We may then define $\hbeta$ on $D_\C\te_{\kk} V_\C$ by
  \[ \hbeta \big( (da \cdot t) \circ (v,s) \big) = (d \gamma(a) \cdot \alpha(t))
    \circ \beta(v,s). \]

  \noindent 
 Now we show the homomorphism property of $\hbeta$ . It respects composition of
  $V_\C$ since $\beta$ does. It respects compositions $D_\C \circ V_\C$
  by the above definition. It respects composition $D_\C \circ D_\C$ by
  \begin{eqnarray*}
    \hbeta \big((da \cdot t) \circ (db \cdot s) \big)
    & = & \hbeta \big((s \rhd a) db \cdot t \big) \\
    & = & \gamma(s \rhd a) \hbeta(db \cdot t) \\
    & = & \gamma(s \rhd a) d \gamma(b) \cdot \alpha(t) \\
    & = & (\alpha(s) \rhd \gamma(a)) d \gamma (b) \cdot \alpha(t) \\
    & = & (d \gamma(a) \cdot \alpha(t)) \circ (d\gamma(b) \cdot \alpha(s)) \\
    & = & \hbeta(da \cdot t) \circ \hbeta(db \cdot s).
  \end{eqnarray*}
  Applying $\hbeta$ to the composition $V_\C \circ D_\C$
  \begin{eqnarray*}
    \hbeta\big( (v,s) \circ (da \cdot t)\big)
    & = & \hbeta\big(da \cdot (t \rhd_v s)\big) \\
    & = & d \gamma(a) \cdot \alpha(t \rhd_v s) \\
    \text{(use Part i. of Thm. \ref{thm:TEA})}
    & = & d \gamma(a) \cdot \beta\big( (v,s)) (\alpha(t)\big).
 \end{eqnarray*}
 This map sends $u \in L_\C$ to
 $(u \rhd \gamma(a)) \cdot \beta((v,s))(\alpha(t))$,
 and so does the map
 \[ \hbeta((v,s)) \circ d \gamma(a) \cdot \alpha(t) =
   \hbeta((v,s)) \circ \hbeta(da \cdot t). \]
 So these maps are equal and 
 $\hbeta$ respects composition on $D_\C \circ V_\C$. 

  \medskip
  \noindent {\bf Part c.} i. For $\phi$ in $V_\C$ this follows easily from
  Part i in Theorem \ref{thm:TEA}. For $\phi$ in $D_\C$ it is an easy
  computation. Since $\hbeta$ respects compositions, we then derive it
  for general $\phi$. 

  \medskip
\noindent ii. When $\phi$ is in $V_\C$ this is by Part ii in Theorem
  \ref{thm:TEA}. When $\phi$ is in $D_\C$ we have the following
  computation using Lemma \ref{prop:LR-dxyr}b :
  \begin{eqnarray*}
    \hbeta \big(t \rhd (da \cdot s)\big)
    & = &
    \hbeta \big( da \cdot (t \rhd s) + d(t \rhd a) \cdot s -
                                               (da\cdot s) \circ dt\big) \\
    & = & d \gamma(a) \cdot \alpha(t \rhd s) + d (\gamma (t \rhd a)) \cdot
        \alpha(s) - d\gamma(a) \cdot \alpha(s) \circ \beta d(t) \\
    & = & d \gamma(a) \cdot (\alpha(t)\rhd \alpha(s)) + d (\alpha (t) \rhd
        \gamma(a)) \cdot
        \alpha(s) - (d\gamma(a) \cdot \alpha(s)) \circ d \alpha(t) \\
    \text{(use Lemma \ref{prop:LR-dxyr}b)}
    & = & \alpha(t) \rhd (d \gamma(a) \cdot \alpha(s)) \\
    & = & \alpha(t) \rhd \hbeta(da \cdot s).
  \end{eqnarray*}
  Now ii follows by the easily checked fact that it holds for
  compositions if it holds for each factor.
\end{proof}

\section{Remarks on equivariance} 
We finally return to some remarks on Theorem~\ref{th:1} in the light of the universal diagram~(\ref{eq:univ-PEA}). 
Consider the canonical example $(L,\triangleright)$ of vector fields on $\RR^d$, where $R = C^\infty(\RR^d)$. 
Let $\C = \{\ab\}$ and choose a mapping $\ab\mapsto f\in L$ inducing the universal arrows $\hat{\alpha}, \hat{\beta},\hat{\gamma}$ in~(\ref{eq:univ-PEA}).
Any affine diffeomorphism $\xi(x) = Ax+b$ on $\RR^d$ induces isomorphisms on $L$, $\eend_R(L,L)$ and $R$ by pullback of tensors:
\begin{align*}
\xi\cdot f &:= A^{-1} f\circ \xi\\
(\xi\cdot G)(f) &:= \xi \cdot(G(\xi\cdot f))\\
\xi \cdot r &:= r\circ \xi
\end{align*}
for $f\in L$, $G\in \eend_R(L,L)$ and $r\in R$. 

Given three finite series $B_L\in \lri_{\C},  B_E\in \emo_{\C},  B_R\in \rri_{\C}$ we obtain three mappings 
\begin{align*} 
\Phi_L(f) &:= \hat{\alpha}(B_L) \colon L \rightarrow L\\ 
  \Phi_E(f) &:= \hat{\beta}(B_E) \colon L \rightarrow \eend_R(L,L) \\
  \Phi_R(f) &:=  \hat{\beta}(B_R) \colon L \rightarrow R. 
\end{align*}
It is straightforward to check that these are all equivariant with respect to the action of affine diffeomorphisms:
$\Phi_L(\xi\cdot f) = \xi\cdot \big(\Phi_L(f)\big)$, $\Phi_E( \xi(f)) = \xi\cdot  \big(\Phi_E(f)\big)$ and $\Phi_R( \xi(f)) = \xi\cdot  \big(\Phi_R(f)\big)$. 
Theorem~\ref{th:1} states that any smooth local  affine equivariant mapping $\Phi\colon L\rightarrow L$ has an aromatic $B$-series $\overline{B_L}\in \overline{ \lri_{\C}}$, where the overline denotes the graded completion, i.e.\ the space of formal infinite series. The proof technique~\cite{munthe2016aromatic}, seems to work also for smooth local mappings between different tensor bundles. Hence, we claim:

\begin{claim}A smooth, local mapping $\Phi_E\colon L\rightarrow \eend_R(L,L)$ has an aromatic series $\overline{B_E}\in \overline{ \emo_{\C}}$ if and only it is affinely equivariant.  A smooth, local mapping $\Phi_R\colon L\rightarrow R$ has an aromatic series $\overline{B_R}\in \overline{ \rri_{\C}}$  if and only it is affinely equivariant. Subject to convergence, the mappings are represented by their aromatic $B$-series.
\end{claim}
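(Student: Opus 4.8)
The plan is to reduce both statements to Theorem~\ref{th:1} by means of the standard dictionary between $\mathrm{GL}_d$-equivariant polynomial operators and index-contracted graphs, treating the target bundles $\End_R(L,L)$ and $R$ as tensor bundles carrying, respectively, one free upper and one free lower index, or no free index at all. The \emph{only if} direction is the easy one: since the universal arrows $\hat\alpha,\hat\beta,\hat\gamma$ of diagram~\eqref{eq:univ-PEA} are assembled from the pre-Lie-Rinehart structure and the trace alone, they intertwine the pullback action of any affine diffeomorphism, so every operator of the form $\hat\beta(B_E)$ or $\hat\gamma(B_R)$ is automatically affinely equivariant, exactly as already recorded for $\Phi_E,\Phi_R$ above. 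The substance of the claim is therefore the \emph{converse}, and here I would follow the proof technique of~\cite{munthe2016aromatic}, the only modification being the number of free indices carried by the target.

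For the converse I would first exploit locality. Since $\supp(\Phi(f))\subset\supp(f)$, a Peetre-type argument shows that on any relatively compact set the value of $\Phi_R(f)$ (resp.\ $\Phi_E(f)$) at a point $x$ depends only on a finite jet of $f$ at $x$; passing to formal Taylor expansions then lets me treat $\Phi$ as a formal power series in the jet variables $\{\partial_J f^i(x)\}$. Translation equivariance forces this dependence to go through the jet at $x$ alone with coefficients independent of $x$, i.e.\ $\Phi$ is ``constant-coefficient'' in the jet variables. Decomposing into homogeneous components and invoking equivariance under the linear part $A\in\mathrm{GL}_d$ of the affine group, each homogeneous component must be a $\mathrm{GL}_d$-equivariant polynomial in the jets valued in the appropriate target representation (the trivial representation for $R$, and $V\otimes V^*$ for $\End_R(L,L)$).

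At this point the classical first fundamental theorem of invariant theory for $\mathrm{GL}_d$ applies: every such equivariant polynomial is a linear combination of complete index contractions of the tensors $\partial_{j_1}\cdots\partial_{j_k}f^i$ against the free indices of the target. Reading these contractions combinatorially recovers the dictionary underlying the whole paper: a contraction with one free upper index is an aromatic-tree summand, a closed contraction with no free index is a multi-aroma, and a contraction carrying one free upper and one free lower index is a marked aromatic tree, i.e.\ a basis element of $\emo_\C$ in the free decomposition~\eqref{eq:aroma-Efree} (the free lower index being the grafting slot, the free upper index the root). Hence the scalar-valued equivariants are spanned by multi-aromas and the endomorphism-valued ones by elementary endomorphisms, so each homogeneous component of $\Phi_R$ (resp.\ $\Phi_E$) is the image under $\hat\gamma$ (resp.\ $\hat\beta$) of an element of $\rri_\C$ (resp.\ $\emo_\C$); assembling the components yields the desired formal series $\overline{B_R}\in\overline{\rri_\C}$ (resp.\ $\overline{B_E}\in\overline{\emo_\C}$).

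The main obstacle is the reduction step rather than the invariant theory: one must carefully establish the Peetre-type finiteness and the reduction to jet-polynomial form for a merely smooth local operator, as in~\cite{munthe2016aromatic}, and then keep exact track of which free indices survive, so that the scalar case lands in $\rri_\C$ and the endomorphism case in $\emo_\C$ (and not in $\lri_\C$). I note that for fixed finite $d$ the first fundamental theorem still guarantees spanning, which is all that existence of a series requires; the second fundamental theorem would only introduce relations in low dimension, affecting uniqueness of $B_E,B_R$ but not their existence. Finally, the argument produces only a formal series coefficient by coefficient, and recovering $\Phi$ itself as the sum of its aromatic $B$-series requires convergence — which is precisely why the statement is qualified ``subject to convergence.''
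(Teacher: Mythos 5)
First, a point of comparison: the paper itself offers \emph{no} proof of this statement --- it is deliberately labelled a Claim, with the authors only asserting that ``the proof technique of \cite{munthe2016aromatic} seems to work also for smooth local mappings between different tensor bundles.'' Your proposal is precisely an outline of that intended technique (nonlinear Peetre-type reduction to jets, translation invariance, $\mathrm{GL}_d$-equivariance, first fundamental theorem, graphical dictionary), and for the scalar case $\Phi_R\colon L\to R$ the dictionary is sound: a closed contraction of the tensors $\partial_{j_1}\cdots\partial_{j_k}f^i$ is exactly a multi-aroma, and $\rri_\C$ is by definition the full span of multi-aromas, so each homogeneous component lands where it should.

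For the endomorphism-valued case, however, there is a genuine gap at the dictionary step, and it sits exactly where you wave your hands. You assert that ``a contraction carrying one free upper and one free lower index is a marked aromatic tree, i.e.\ a basis element of $\emo_\C$ in the free decomposition \eqref{eq:aroma-Efree}.'' This fails on two counts. (i) Not every such contraction is a marked aromatic tree: the two free indices may be joined by a bare Kronecker delta, as in $\Phi_E(f)=\operatorname{div}(f)\cdot\mathrm{id}$, i.e.\ $G^i_j=(\partial_k f^k)\,\delta^i_j$, which is smooth, local ($\supp\Phi_E(f)\subseteq\supp f$) and affinely equivariant, yet corresponds to no element of $\hat{E}_\C$, let alone of $\emo_\C$. (ii) Even among genuine marked aromatic trees, $\emo_\C=\emo_{R_\C}(\lri_\C)$ is a \emph{proper} $R_\C$-submodule of $\hat{E}_\C$: by Proposition \ref{prop:aroma-EC} its $\arot_\C$-summand contains only the complete sums $da\cdot t=\sum_{v\in a}(v,a)\cdot t$, whereas a single term $(v,a)\cdot t$ with the mark $v$ on the interior cycle of a (non-symmetric) aroma is a perfectly good equivariant local operator that is not an $R_\C$-combination of the three summands in \eqref{eq:aroma-Efree}. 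So your invariant-theory argument, carried out honestly, produces a series in a space strictly larger than $\overline{\emo_\C}$, and the direction ``equivariant $\Rightarrow$ series in $\overline{\emo_\C}$'' does not follow; either the target space in the first sentence of the Claim must be enlarged (roughly to $R_\C\cdot\mathrm{id}\oplus\hat{E}_\C$) or a further argument is needed to cut the equivariants down to the elementary endomorphisms. You should flag this rather than fold it into the phrase ``keep exact track of which free indices survive'': the bookkeeping is not the issue, the spanning statement is.
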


\section*{Acknowledgements} The second author thanks Universitetet i Bergen for the warm welcome and stimulating atmosphere during his two visits in September 2018 and May 2019, and Trond Mohn Foundation for support. He also thanks Camille Laurent-Gengoux for an illuminating e-mail discussion on Lie algebroids.

\bibliographystyle{plain}
\bibliography{aroma_cite}

\begin{thebibliography}{10}

\bibitem{bogfjellmo2019algebraic}
Geir Bogfjellmo.
\newblock Algebraic structure of aromatic {B}-series.
\newblock {\em Journal of Computational Dynamics}, 6(2):199, 2019.

\bibitem{butcher1963coefficients}
John~C Butcher.
\newblock Coefficients for the study of {R}unge-{K}utta integration processes.
\newblock {\em Journal of the Australian Mathematical Society}, 3(2):185--201,
  1963.

\bibitem{butcher1972algebraic}
John~C Butcher.
\newblock An algebraic theory of integration methods.
\newblock {\em Mathematics of Computation}, 26(117):79--106, 1972.

\bibitem{cayley1857}
Arthur Cayley.
\newblock On the theory of the analytical forms called trees.
\newblock {\em The London, Edinburgh, and Dublin Philosophical Magazine and
  Journal of Science}, 13(85):172--176, 1857.

\bibitem{chapoton2001pre}
Fr{\'e}d{\'e}ric Chapoton and Muriel Livernet.
\newblock Pre-{L}ie algebras and the rooted trees operad.
\newblock {\em International Mathematics Research Notices}, 2001(8):395--408,
  2001.

\bibitem{chartier2007preserving}
Philippe Chartier and Ander Murua.
\newblock Preserving first integrals and volume forms of additively split
  systems.
\newblock {\em IMA journal of numerical analysis}, 27(2):381--405, 2007.

\bibitem{dzhumadildaev2002trees}
Askar Dzhumadil'daev and Clas L{\"o}fwall.
\newblock Trees, free right-symmetric algebras, free {N}ovikov algebras and
  identities.
\newblock {\em Homology, Homotopy and applications}, 4(2):165--190, 2002.

\bibitem{huebschmann1990}
Johannes Huebschmann.
\newblock Poisson cohomology and quantization.
\newblock {\em J. f.d. Reine u. Angew. Math.}, 408:57--113, 1990.

\bibitem{iserles2007b}
Arieh Iserles, GRW Quispel, and Pui Sze~Priscilla Tse.
\newblock B-series methods cannot be volume-preserving.
\newblock {\em BIT Numerical Mathematics}, 47(2):351--378, 2007.

\bibitem{mclachlan2016b}
Robert~I McLachlan, Klas Modin, Hans Munthe-Kaas, and Olivier Verdier.
\newblock B-series methods are exactly the affine equivariant methods.
\newblock {\em Numerische Mathematik}, 133(3):599--622, 2016.

\bibitem{munthe2016aromatic}
Hans Munthe-Kaas and Olivier Verdier.
\newblock Aromatic {B}utcher series.
\newblock {\em Foundations of Computational Mathematics}, 16(1):183--215, 2016.

\bibitem{munthe2020invariant}
Hans~Z Munthe-Kaas, Ari Stern, and Olivier Verdier.
\newblock Invariant connections, {L}ie algebra actions, and foundations of
  numerical integration on manifolds.
\newblock {\em SIAM Journal on Applied Algebra and Geometry}, 4(1):49--68,
  2020.

\bibitem{rinehart1963differential}
George~S Rinehart.
\newblock Differential forms on general commutative algebras.
\newblock {\em Transactions of the American Mathematical Society},
  108(2):195--222, 1963.

\end{thebibliography}

%
%
%
%
%

\end{document}